\newcommand{\RR}{\mathbb{R}}
\newcommand{\ZZ}{\mathbb{Z}}
\newcommand{\NN}{\mathbb{N}}
\newcommand{\VV}{\mathcal{V}}
\newcommand{\PSD}{\mathcal{S}_+}
\newcommand{\rank}{\textup{rank}\,}
\newcommand{\psdrank}{\textup{rank}_{\textup{psd}}}
\newcommand{\conv}{\textup{conv}}
\newcommand{\sqrtrank}{\textup{rank}_{\! \! {\sqrt{\ }}}\,}
\def\ext{\ensuremath{\textup{ext}}}
\newtheorem{theorem}{Theorem}[section]
\newtheorem{lemma}[theorem]{Lemma}
\newtheorem{corollary}[theorem]{Corollary}
\newtheorem{proposition}[theorem]{Proposition}
\theoremstyle{definition}
\newtheorem{definition}[theorem]{Definition}
\newtheorem{example}[theorem]{Example}
\theoremstyle{remark}
\newtheorem{question}[theorem]{Question}
\title{Spectrahedral Lifts of Convex Sets}
\author[Thomas]{Rekha R. Thomas}
\address{Department of Mathematics, University of Washington, Box
  354350, Seattle, WA 98195, USA} \email{rrthomas@uw.edu}
\thanks{The author was partially supported by the U.S. National Science Foundation grant DMS-1719538. 
This paper was written while the author was in residence at the Mathematical Sciences Research Institute in Berkeley, California, during the Fall 2017 semester, and based on work supported by the National Science Foundation under Grant No. 1440140.}
\date{\today}
\begin{document}

\begin{abstract}
Efficient representations of convex sets are of crucial importance for many algorithms that work with them. It is well-known that 
sometimes, a complicated convex set can be expressed as the projection of a much simpler set in higher 
dimensions called a {\em lift} of the original set. This is a brief survey of recent developments in the 
topic of lifts of convex sets. Our focus will be on lifts that arise from affine slices of real positive semidefinite 
cones known as {\em psd} or {\em spectrahedral lifts}. The main result is that projection representations 
of a convex set are controlled by factorizations, through closed convex cones, of an operator that comes from the convex set.
This leads to several research directions and results that lie at the intersection of convex geometry, combinatorics, real algebraic geometry, optimization, computer science and more.
\end{abstract}
\maketitle

\section{Introduction} \label{sec:introduction}

Efficient representations of convex sets are of fundamental importance in many areas of mathematics. An old idea from  
optimization for creating a compact representation of a convex set is to express it 
as the projection of a higher-dimensional set that might potentially be simpler, see for example 
\cite{CCZSurvey}, \cite{BenTalNemirovski}. 
In many cases, this technique offers surprisingly  
compact representations of the original convex set. We present the basic questions that arise in the 
context of projection representations, provide some answers, pose more questions, and examine the current limitations and challenges.

As a motivating example, consider a full-dimensional convex polytope $P \subset \RR^n$. Recall that $P$ can be 
expressed either as the convex hull of a finite collection of points in $\RR^n$ or as the intersection of a finite set of linear halfspaces. The minimal set of points needed in the convex hull representation are the {\em vertices} of $P$, and the 
irredundant inequalities needed are in bijection with the {\em facets} (codimension-one faces) of $P$. Therefore, if the number of facets of $P$ is 
exponential in $n$, then the linear inequality representation of $P$ is of size exponential  in $n$.
The complexity of optimizing a linear function over $P$ depends on the size of its inequality representation and hence it is worthwhile to ask if efficient 
inequality representations can be obtained through some indirect means such as projections.
We illustrate the idea on two examples.

\begin{example} \label{ex:crosspolytope}
The $n$-dimensional {\em crosspolytope} $C_n$ is the convex hull of the standard unit vectors $e_i \in \RR^n$ 
and their negatives  \cite[Example 0.4]{ZieglerBook}. For example, $C_2$ is a square and $C_3$ is an octahedron, see Figure~\ref{fig:permutahedron and octahedron}. Written in terms of inequalities, 
$$C_n = \{ x \in \RR^n \,:\, \pm x_1 \pm x_2 \pm \cdots \pm x_n \leq 1 \}$$
and all $2^n$ inequalities listed are needed as they define facets of $C_n$. 
However, $C_n$ is also the projection onto $x$-coordinates 
of the polytope 
$$Q_n =\left\{ (x,y) \in \RR^{2n} \,:\, \sum_{i=1}^n y_i = 1, \,\, -y_i \leq x_i \leq y_i \,\,\, \forall i=1,\ldots,n \right\}$$
which involves only $2n$ inequalities and one equation. 
\qed
\end{example}

\begin{figure} 
\includegraphics[scale=0.2]{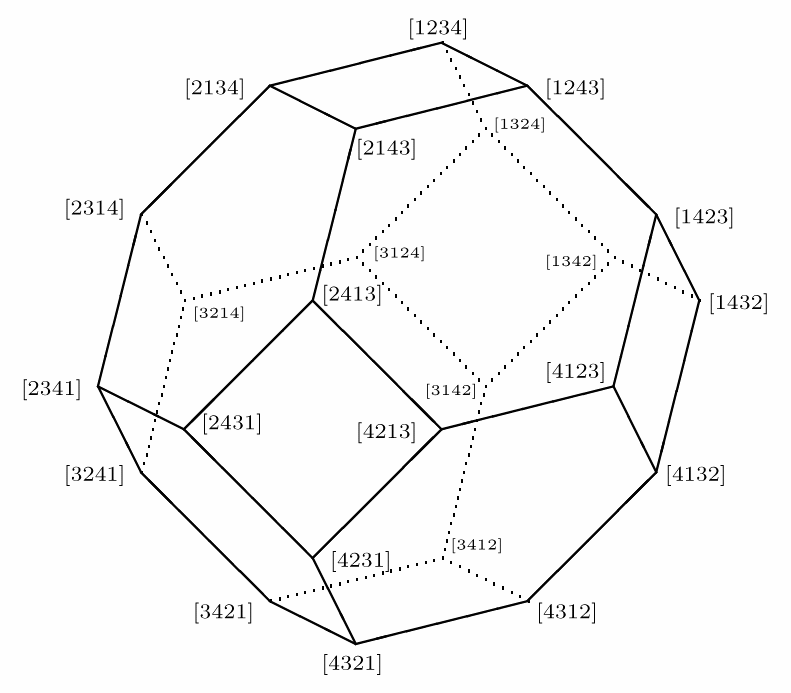} 
\hspace{1cm}
\includegraphics[scale=0.25]{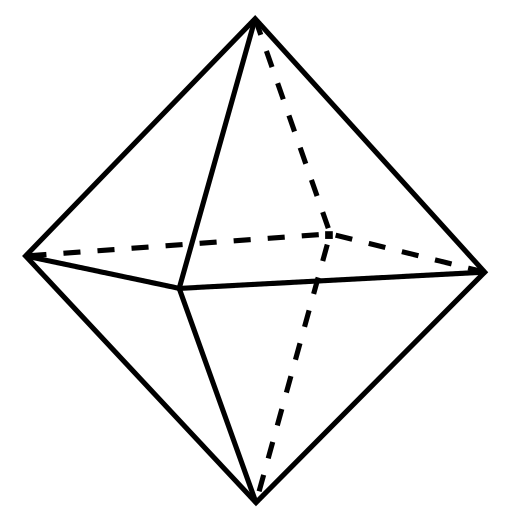}
\caption{The permutahedron $\Pi_4$ and the crosspolytope $C_3$. \label{fig:permutahedron and octahedron}}
\end{figure}

\begin{example} \label{ex:permutahedron}
The {\em permutahedron} $\Pi_n$ is the $(n-1)$-dimensional polytope that is the convex hull of all vectors obtained by permuting the coordinates of the $n$-dimensional vector $(1,2,3, \ldots, n)$. It has $2^n - 2$ facets, each indexed by a proper subset of $[n]:=\{1,2,\ldots,n\}$ \cite[Example 0.10]{ZieglerBook}. In \cite{Goemans}, Goemans used sorting networks to show that $\Pi_n$ is the linear image of a polytope $Q_n$ that has $\Theta(n \, \log \, n)$ variables and facets, and also argued that one cannot do better. \qed
\end{example}

The key takeaway from the above examples is that one can sometimes find efficient linear representations of polytopes if extra variables are allowed; a complicated polytope $P \subset \RR^n$ might be the linear projection of a polytope $Q \subset \RR^{n+k}$ with many fewer facets. To be considered efficient, both $k$ and the number of facets of $Q$ must be polynomial functions of $n$. Such a polytope $Q$ is called a {\em lift} or {\em extended formulation} of $P$. Since optimizing a linear function over $P$ is equivalent to optimizing the same function over a lift of it, these projection representations offer the possibility of efficient algorithms for 
linear programming over $P$.

Polytopes are special cases of closed convex sets and one can study closed convex lifts in this more general context. 
All convex sets are slices of closed convex cones by affine planes and hence we will look at lifts of convex sets 
that have this form. Formally, given a closed convex cone $K \subset \RR^m$, an affine plane $L \subset \RR^m$, and a convex set $C \subset \RR^n$, we say that $K \cap L$ is a $K$-{\em lift} of $C$ if $C = \pi(K \cap L)$ for some linear map $\pi \,:\, \RR^m \rightarrow \RR^n$.  Recall that every polytope is an affine slice of a nonnegative orthant $\RR^k_+$ and hence polyhedral lifts of polytopes, 
as we saw in Examples~\ref{ex:crosspolytope} and \ref{ex:permutahedron},  are special cases of cone lifts. 
A polytope can also have non-polyhedral lifts.

The main source of non-polyhedral lifts in this paper will come from the {\em positive semidefinite cone} $\PSD^k$ of $k \times k$ real symmetric positive semidefinite (psd) matrices. If a matrix $X$ is psd, we write $X \succeq 0$.
An affine slice of $\PSD^k$ is called a {\em spectrahedron} of size $k$. If a spectrahedron (of size $k$) is a lift of a convex set $C$, we say that $C$ admits a {\em spectrahedral or psd lift} (of size $k$). It is also common to say that $C$ is {\em sdp representable} or a {\em projected spectrahedron} or a {\em spectrahedral shadow}. 
Note that a spectrahedron in $\PSD^k$ can also be written in the form 
$$\left\{ x \in \RR^t \,:\, A_0 + \sum_{i=1}^t  A_i x_i \succeq 0 \right\}$$ 
where $A_0, A_1, \ldots, A_t$ are real symmetric matrices of size $k$. 

\begin{example} \label{ex:square}
The square $P \subset \RR^2$ with vertices $(\pm 1, \pm 1)$ can be expressed as the projection of a spectrahedron as follows:
$$ P = \left\{ (x,y) \in \RR^2 \,:\, \exists \, z \in \RR \textup{ s.t. } \begin{pmatrix} 1 & x & y \\ x & 1 & z\\ y & z & 1   \end{pmatrix} \succeq 0 \right \}.$$
\begin{figure} 
\includegraphics[scale=0.35]{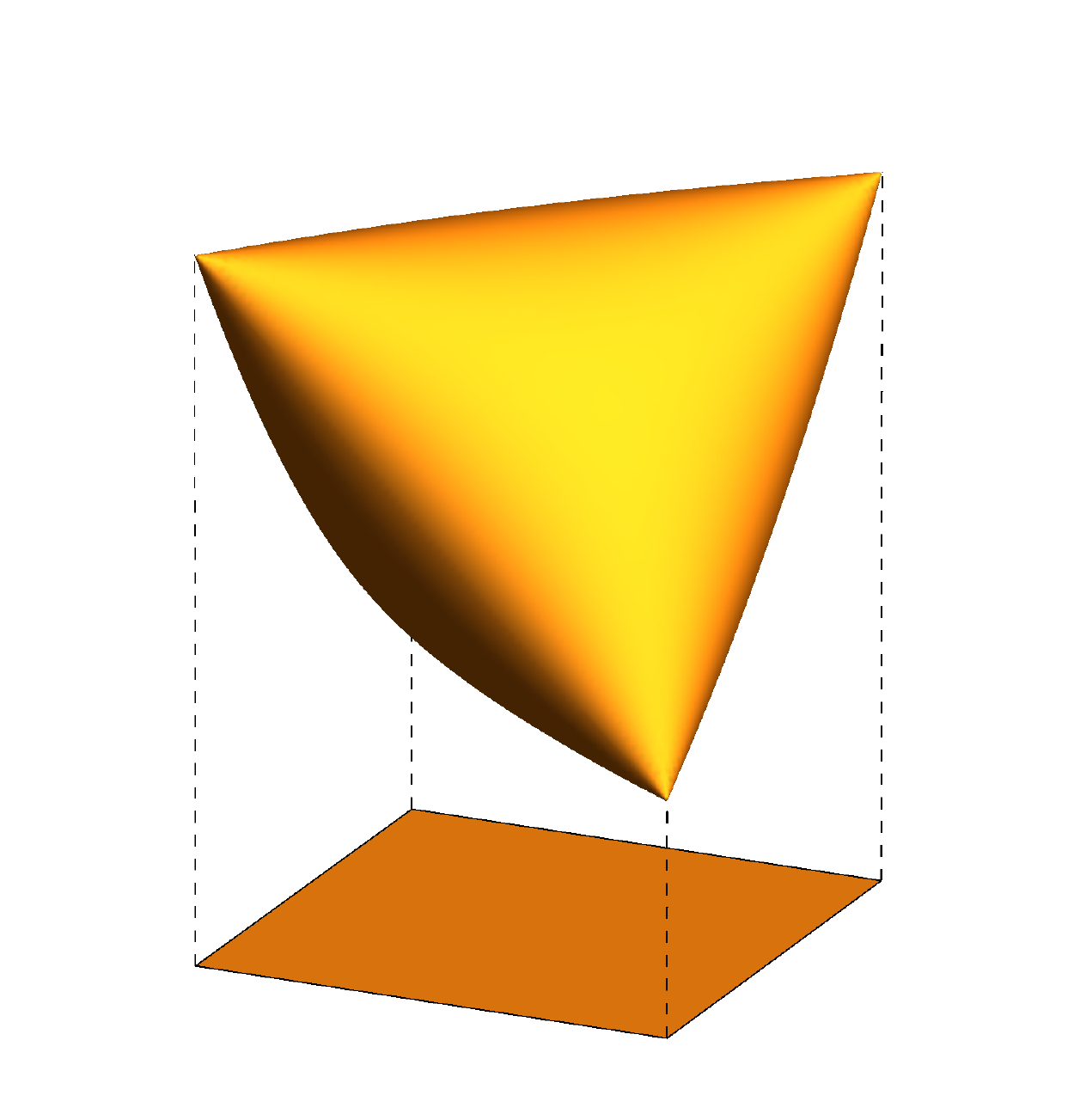}
\caption{A spectrahedral lift of a square. \label{fig:spectrahedral lift of square}
}
\end{figure}
The spectrahedral lift in this example is known as the {\em elliptope} and is shown in Figure~\ref{fig:spectrahedral lift of square}. It consists of all $X \in \PSD^3$ such that $X_{ii} = 1$ for $i=1,2,3$. \qed
\end{example}

\begin{example} \label{ex:stable set polytope}
Given a graph $G=([n],E)$ with vertex set $[n]$ and edge set $E$, a collection $S \subseteq [n]$ is called a {\em stable set} if for each $i,j \in S$, the pair $\{i,j\} \not \in E$. Each stable set $S$ is uniquely identified by its 
incidence vector $\chi^S \in \{0,1\}^n$ defined as $(\chi^S)_i = 1$ if $i \in S$ and $0$ otherwise. The {\em stable set polytope} of $G$ is 
$$ \textup{STAB}(G) := \conv\{ \chi^S \,:\, S \textup{ stable set in } G \}$$
where $\conv$ denotes convex hull.
For $x=\chi^S$, consider the rank one matrix in $\PSD^{n+1}$

$$\begin{pmatrix} 1 \\ x \end{pmatrix} \begin{pmatrix} 1 & x^\top \end{pmatrix} = 
\begin{pmatrix} 1 & x^\top \\ x & xx^\top \end{pmatrix} = \begin{pmatrix} 1 & x^\top \\ x & U \end{pmatrix}.$$

Since $\chi^S \in \{0,1\}^n$, $U_{ii} = x_i$ for all $i \in [n]$, and since $S$ is stable, $U_{ij} = 0$ for all $\{i,j\} \in E$. Therefore, the convex set 

$$ \textup{TH}(G) := \left\{ x \in \RR^n \,:\, 
\begin{array}{l} 
\exists U \in \PSD^n \textup{ s.t. } \begin{pmatrix} 1 & x^\top \\
x & U \end{pmatrix} \succeq 0, \\
U_{ii} = x_i \,\,\forall i \in [n],\\
U_{ij} = 0 \,\,\forall \{i,j\} \in E 
\end{array}
\right \}$$
known as the {\em theta body}  of $G$, contains all the vertices of $\textup{STAB}(G)$, and hence by convexity, all of 
$\textup{STAB}(G)$. In general, this containment is strict.  The theta body $\textup{TH}(G)$ is the projection onto $x$-coordinates of the set of all matrices in $\PSD^{n+1}$ whose entries satisfy a set of linear constraints. The latter is a spectrahedron.

Theta bodies of graphs were defined by Lov{\'a}sz in \cite{LovaszTheta}. He proved that 
$\textup{STAB}(G) = \textup{TH}(G)$ if and only if $G$ is a perfect graph. Even for perfect graphs, 
$\textup{STAB}(G)$ can have exponentially many facets, but by Lov{\'a}sz's result, it admits a spectrahedral lift of 
size $n+1$. \qed
\end{example}

We close the introduction with a psd lift of a non-polytopal convex set.
Since polyhedra can only project to polyhedra, any lift of a non-polyhedral convex set is necessarily non-polyhedral.

\begin{example} \label{ex:ball}
Let $X$ be the $n \times n$ symbolic matrix with entries $x_1, \ldots, x_{n^2}$ written consecutively along its $n$ rows.  and let $I_n$ denote the $n \times n$ identity matrix. Consider the spectrahedron of size $2n$ defined by the conditions 
$$ \begin{pmatrix} 
                                                 Y  & X \\ 
                                                         & \\
                                                         X^\top & I_n 
                                                \end{pmatrix}  \succeq 0, \,\,\,\textup{trace}(Y) = 1.$$
The psd condition is equivalent to $Y - X X^\top \succeq 0$ via Schur complement.  Taking the trace on both sides we get $1 = \textup{trace}(Y) \geq \textup{trace}(X X^\top) = \sum_{i=1}^{n^2} x_i^2$. 
Thus, the projection of the above spectrahedron onto $x = (x_1, \ldots, x_{n^2})$ is contained in the unit ball 
$$B_{n^2} := \{ (x_1,\ldots,x_{n^2}) \,:\, \sum x_i^2 \leq 1 \}.$$
On the other hand, for any $x$ on the boundary of $B_{n^2}$, the matrix 
$$   \begin{pmatrix} 
                                                 XX^\top  & X \\ 
                                                         & \\
                                                         X^\top & I_n 
                                                \end{pmatrix}$$ 
                                                lies in the above spectrahedron and projects onto $x$. We conclude that 
                                                $B_{n^2}$ has a spectrahedral lift of size $\textup{O}(n)$.  
\qed
\end{example}

In many of the above cases, projections offer a more compact representation of the convex set in question 
compared to the natural representation the set came with. Two fundamental questions we can ask now are the following.

\begin{question} \label{qn:existence of cone lift}
Given a convex set $C \subset \RR^n$ and a closed convex cone $K \subset \RR^m$, does $C$ admit a $K$-lift? 
\end{question}

\begin{question}
If $K$ comes from a family of cones $\{K_t \subset \RR^t\}$ such as the set of all positive orthants or the set of all psd cones, what is the smallest $t$ for which $C$ admits a $K_t$-lift?
The smallest such $t$ is a measure of complexity of $C$.
\end{question}

We will address both these questions and discuss several further related directions and results. In Section~\ref{sec:factorization for convex sets}   we prove that the existence of a $K$-lift for a convex set $C$ is 
controlled by the existence of a $K$-factorization of an operator associated to $C$. This result specializes nicely to polytopes as we will see in Section~\ref{sec:factorization for polytopes}. These factorization theorems generalize a celebrated result of Yannakakis 
\cite{Yannakakis} about polyhedral 
lifts of polytopes. The rest of the sections are focussed on spectrahedral lifts of convex sets. In Section~\ref{sec:psd rank} we define the notion 
of positive semidefinite rank  (psd rank) of a convex set and explain the known bounds on this invariant.  We also mention recent results about psd ranks of certain families of convex sets. The psd rank of an $n$-dimensional polytope is known to be at least $n+1$. In Section~\ref{sec:psd minimality}, we explore the class of polytopes that have this minimum possible psd rank. 
We conclude in Section~\ref{sec:sos} with the basic connections between sum of squares polynomials and spectrahedral lifts. 
We also describe the recent breakthrough by Scheiderer that provides the first examples of convex semialgebraic sets 
that do not admit spectrahedral lifts.

\section{The Factorization Theorem for Convex Sets}
\label{sec:factorization for convex sets}

A convex set is called a {\em convex body} if it is compact and contains the 
origin in its interior. For simplicity, we will always assume that all our convex sets 
are convex bodies. 
Recall that the {\em polar} of a convex set $C \subset \RR^n$ is the set
\[
C^\circ=\{y \in \RR^n: \langle x,y \rangle \leq 1, \ \ \forall x \in C \}.
\] 
Let $\ext(C)$ denote the set of {\em extreme points} of
$C$, namely, all points $p \in C$ such that if $p = (p_1 + p_2)/2$, with $p_1,p_2 \in C$, then 
$p=p_1=p_2$.  Both $C$ and $C^\circ$ are convex hulls of their  
respective extreme points. Consider the operator $S:\RR^n\times \RR^n
\rightarrow \RR$ defined by $S(x,y)=1-\left<x,y\right>$. The {\em slack operator} $S_C$, of a convex set $C \subset \RR^n$, 
is the restriction of the operator $S$ to $\ext(C) \times \ext(C^\circ)$. Note that the range of $S_C$ is contained in 
$\RR_+$, the set of nonnegative real numbers.

\begin{definition} \label{def:K-lift}
Let $K \subset \RR^m$ be a full-dimensional closed convex cone and $C \subset \RR^n$ a full-dimensional 
convex body. A {\em $K$-lift} of $C$ is a set $Q=K \cap L$,
where $L \subset \RR^m$ is an affine subspace, and $\pi:\RR^m \rightarrow \RR^n$ is a linear map 
such that $C=\pi(Q)$. If
$L$ intersects the interior of $K$ we say that $Q$ is a {\em
  proper $K$-lift} of $C$.
\end{definition}

We will see that the existence of a $K$-lift of $C$ is intimately related to properties of the slack operator $S_C$.
Recall that the {\em dual} of a 
closed convex cone $K \subset \RR^m$ is 
$$K^*=\{y \in \RR^m: \langle x,y \rangle \geq 0, \ \ \forall x \in K
\}.$$ A cone $K$ is \emph{self-dual} if $K^* = K$. The cones $\RR_+^n$ and $\PSD^k$ are self-dual.

\begin{definition}
Let $C$ and $K$ be as in Definition~\ref{def:K-lift}. We say that the slack operator 
$S_C$ is
\emph{$K$-factorizable} if there exist maps (not necessarily linear)
$$A:\ext(C)\rightarrow K \,\,\,\,\textup{and} \,\,\,\, B:\ext(C^\circ)\rightarrow K^{*}$$
such
that $S_C(x,y)=\left<A(x),B(y)\right>$ for all $(x,y) \in \ext(C)
\times \ext(C^\circ)$.
\end{definition}

We can now characterize the existence of a $K$-lift of $C$ in terms of the operator $S_C$, answering 
Problem~\ref{qn:existence of cone lift}. The proof 
relies on the theory of {\em convex cone programming} which is the problem of optimizing a linear 
function over an affine slice of a closed convex cone, see \cite{BenTalNemirovski}, or \cite[\S 2.1.4]{SIAMBook} for a quick introduction.

\begin{theorem}  \cite[Theorem 1]{GPT-lifts}
\label{thm:general_Yannakakis}
If $C$ has a proper $K$-lift then $S_C$ is
$K$-factorizable. Conversely, if $S_C$ is $K$-factorizable then $C$
has a $K$-lift.
\end{theorem}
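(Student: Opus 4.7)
The plan is to handle the two implications by distinct methods: a conic duality argument for the forward implication and an explicit construction for the converse. Throughout I will use the bipolar identity $C^{\circ\circ} = C$, which holds because $C$ is a convex body.

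For the forward direction, suppose $Q = K \cap L$ is a proper $K$-lift with linear projection $\pi$ onto $C$. For each $x \in \ext(C)$ I choose some preimage $A(x) \in Q$, so that $A(x) \in K \cap L$ and $\pi(A(x)) = x$. For each $y \in \ext(C^\circ)$, the affine function $\phi_y(z) := 1 - \langle \pi(z), y \rangle$ is nonnegative on $Q$. The key step is a conic strong-duality argument: because the lift is \emph{proper}, the affine space $L$ meets the interior of $K$, giving Slater's condition, so there exists a vector $B(y) \in K^*$ and a linear functional $\ell_y$ vanishing on $L$ with $\phi_y(z) = \langle z, B(y) \rangle + \ell_y(z)$ for every $z \in \RR^m$. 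Evaluating at $z = A(x) \in L$ kills the second term and yields
$$S_C(x, y) \;=\; \phi_y(A(x)) \;=\; \langle A(x), B(y) \rangle,$$
which is a $K$-factorization of $S_C$.

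For the converse, suppose $S_C(x,y) = \langle A(x), B(y) \rangle$ with $A(x) \in K$ and $B(y) \in K^*$. Since $C$ is a convex body, so is $C^\circ$, and I fix a basis $y_1, \dots, y_n$ of $\RR^n$ drawn from $\ext(C^\circ)$. The $n$ equations $\langle \widetilde{\pi}(z), y_i\rangle = 1 - \langle z, B(y_i)\rangle$ uniquely determine an affine map $\widetilde{\pi}: \RR^m \to \RR^n$. For each remaining $y = \sum_i \lambda_i y_i \in \ext(C^\circ)$, compatibility of $\widetilde{\pi}$ with the identity $\langle\widetilde{\pi}(z), y\rangle = 1 - \langle z, B(y)\rangle$ forces the linear equation
$$\bigl\langle z,\; B(y) - \textstyle\sum_i \lambda_i B(y_i) \bigr\rangle \;=\; 1 - \textstyle\sum_i \lambda_i,$$
and I let $L$ be the common solution set of all these equations. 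A short calculation using the factorization identity verifies $A(x) \in L$ for every $x \in \ext(C)$, and by design $\widetilde{\pi}(A(x)) = x$, so taking convex hulls yields $C \subseteq \widetilde{\pi}(K \cap L)$. Conversely, if $z \in K \cap L$ then $\langle z, B(y)\rangle \geq 0$, so $\langle \widetilde{\pi}(z), y\rangle \leq 1$ for every $y \in \ext(C^\circ)$, and the bipolar theorem gives $\widetilde{\pi}(z) \in C$. A standard homogenization (appending a coordinate pinned to $1$ on $L$) then replaces the affine $\widetilde{\pi}$ with a linear projection, producing an honest $K$-lift.

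The main obstacle is the dual-certificate step in the forward direction. Conic strong duality requires a constraint qualification, and the hypothesis that the lift is \emph{proper} is precisely what supplies Slater's condition; without it one can only reach the factorization through limiting arguments on perturbed lifts. The converse is essentially bookkeeping once the parametrization of $L$ through the factorization identity is identified, the only real subtlety being the homogenization needed to promote the affine $\widetilde{\pi}$ to the linear map required in the definition of a $K$-lift.
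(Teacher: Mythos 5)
Your proof follows essentially the same path as the paper's: conic strong duality via Slater in the forward direction (the properness hypothesis supplying the interior point), and in the converse an affine map $\widetilde\pi$ determined on a suitable affine subspace $L \subset \RR^m$, followed by linearization. The one genuine (if small) gap is in the final linearization step: writing $\widetilde\pi(z) = \pi_0(z) + c$ with $\pi_0$ linear and replacing it on $L$ by the linear map $\pi(z) = \pi_0(z) + c\,\ell(z)$ requires a linear functional $\ell$ with $\ell \equiv 1$ on $L$, which exists exactly when $0 \notin L$. You invoke this as ``standard homogenization'' without verifying $0\notin L$; the paper checks it explicitly by observing that $0 \in L$ would force $\langle x, y\rangle = 1$ for all $y \in \ext(C^\circ)$ and some fixed $x$, contradicting the fact that $C^\circ$ contains the origin. (In your basis formulation the same contradiction appears: $0\in L$ would give $\langle\widetilde\pi(0), y\rangle = 1$ for every $y\in\ext(C^\circ)$.) Also, a terminological nit in the forward direction: since $\phi_y$ carries a constant term $1$ and $L$ is affine, the piece $\ell_y$ in your decomposition must be an \emph{affine} (not linear) functional vanishing on $L$ — concretely $\ell_y(z)=1-\langle z, z_y\rangle$ with $z_y \in L_0^\perp$ and $\langle w_0, z_y\rangle =1$. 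Finally, your converse determines $\widetilde\pi$ by choosing a linear basis of $\RR^n$ inside $\ext(C^\circ)$, whereas the paper proves well-definedness of the map $z\mapsto x_z$ via compactness of $C$; both are valid and your variant is a slight streamlining, since $\ext(C^\circ)$ linearly spans $\RR^n$ precisely because $C^\circ$ is a convex body.
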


\begin{proof}
Suppose $C$ has a proper $K$-lift. Then there exists an affine space $L = w_0+L_0$ in $\RR^m$ ($L_0$ is a linear subspace) and a linear map $\pi:\RR^m
\rightarrow \RR^n$ such that $C = \pi(K \cap L)$ and $w_0 \in
\textup{int}(K)$. Equivalently, 
\[
C = \{ x \in \RR^n : x = \pi(w), \quad w \in K \cap (w_0 + L_0) \}.
\]
We need to construct the maps $A \,:\, \textup{ext}(C) \rightarrow K$ and 
$B:\textup{ext}(C^\circ)\rightarrow K^*$ that factorize the
slack operator $S_C$, from the $K$-lift of $C$. 
For $x_i \in \textup{ext}(C)$, define $A(x_i) :=
w_i$, where $w_i$ is any point in the non-empty convex set 
$\pi^{-1} (x_i) \cap K \cap L$.

Let $c$ be an extreme point of $C^\circ$. Then $\textup{max} \{ \,
\langle c, x \rangle \,:\, x \in C \, \} = 1$ since $\langle c, x
\rangle \leq 1$ for all $x \in C$, and if the maximum was smaller than
one, then $c$ would not be an extreme point of $C^{\circ}$.  Let $M$
be a full row rank matrix such that $\textup{ker} \,M = L_0$. Then the
following hold:
$$
\begin{array}{cccccc}
\begin{array}{c} 1 = \\ \\ \\ \end{array} &
\begin{array}{c} \textup{max} \langle c,x \rangle \\  x \in C \\ \\ \end{array} & 
\begin{array}{c} = \\ \\ \\ \end{array} &
\begin{array}{c} \textup{max} \langle c, \pi(w) \rangle \\ w  \in K \cap (w_0 + L_0) \\ \\ \end{array} & 
\begin{array}{c} = \\ \\ \\ \end{array} &
\begin{array}{c} \textup{max} \langle \pi^*(c), w \rangle \\ M w  = M w_0 \\ w \in K \end{array} 
\end{array}
$$
Since $w_0$ lies in the interior of $K$,  by Slater's condition we have strong duality for the above cone program, and we get
$$ 1 = \textup{min} \,\langle M w_0, y \rangle \,:\, M^T y - \pi^*(c) \in K^*$$
with the minimum being attained.
Further, setting $z = M^T y$ we have that 
$$ 1 = \textup{min} \, \langle w_0, z  \rangle \,:\,   z - \pi^*(c) \in K^*, \, z \in L_0^\perp $$
with the minimum being attained. Now define 
$B \,:\, \textup{ext}(C^\circ) \rightarrow K^*$ as the map that sends $y_i \in \textup{ext}(C^\circ)$ to 
$B(y_i) := z - \pi^*(y_i)$, where $z$ is any point in the nonempty convex set $L_0^\perp
\cap (K^* + \pi^*(y_i))$ that satisfies $\langle w_0,z \rangle =1$. Note that for such a $z$, $\langle w_i, z \rangle = 1$ for all $w_i \in L$.
Then $B(y_i) \in K^*$, and for an $x_i \in \textup{ext}(C)$,
\begin{align*}
\langle x_i, y_i \rangle &=
\langle \pi (w_i), y_i \rangle  = 
\langle w_i, \pi^* (y_i )\rangle =
\langle w_i , z - B(y_i) \rangle  \\ 
&= 1 - \langle w_i, B(y_i) \rangle =
1 - \langle A(x_i), B(y_i) \rangle.
\end{align*}

Therefore, $S_C(x_i,y_i) = 1 - \langle x_i, y_i \rangle = \langle A(x_i), B(y_i) \rangle$ for all $x_i \in \textup{ext}(C)$ and 
$y_i \in \textup{ext}(C^\circ)$. 

Suppose now $S_C$ is $K$-factorizable, i.e., there exist maps
$A:\ext(C)\rightarrow K$ and $B:\ext(C^\circ)\rightarrow K^*$ such
that $S_C(x,y)=\left<A(x),B(y)\right>$ for all $(x,y) \in \ext(C)
\times \ext(C^\circ)$. Consider the affine space
\[
L=\{(x,z) \in \RR^n \times \RR^m : 1 - \langle x, y \rangle =\left<z,B(y)\right>,
\ \forall \,\,y \in \ext(C^\circ)\},
\] 
and let $L_K$ be its coordinate projection into $\RR^m$. Note that $0 \not \in L_K$ since otherwise, 
there exists $x \in \RR^n$ such that $1- \langle x, y \rangle = 0$ for all $y \in \ext(C^\circ)$ 
which implies that $C^\circ$ lies in the affine hyperplane $\langle x, y \rangle = 1$. This is a contradiction 
since $C^\circ$ contains the origin. Also, $K \cap L_K \neq \emptyset$ since for each $x \in \ext(C)$, 
$A(x) \in K \cap L_K$ by assumption.

Let $x$ be some point in $\RR^n$ such that there exists some $z \in K$
for which $(x,z)$ is in $L$. Then, for all extreme points $y$ of
$C^\circ$ we will have that $1-\langle x,y \rangle$ is nonnegative. This implies,
using convexity, that $1-\left< x ,y \right>$ is nonnegative for all
$y$ in $C^\circ$, hence $x \in (C^\circ)^\circ=C$.

We now argue that this implies that for each $z \in K \cap L_K$ there exists a unique $x_z
\in \RR^n$ such that $(x_z,z) \in L$. That there is one, comes
immediately from the definition of $L_K$. Suppose now that there is
another such point $x_{z}'$. Then $(t x_{z}+(1-t)x_{z}',z) \in L$ for
all reals $t$ which would imply that the line through $x_{z}$ and
$x_{z}'$ would be contained in $C$, contradicting our assumption that $C$ is 
compact.

The map that sends $z$ to $x_{z}$ is therefore well-defined in $K \cap
L_K$, and can be easily checked to be affine. Since the origin is not in $L_K$, we can extend it to
a linear map $\pi:\RR^m \rightarrow \RR^n$.  To finish the proof it
is enough to show $ C = \pi(K \cap L_K)$. We have already seen that
$\pi(K \cap L_K) \subseteq C$ so we just have to show the reverse
inclusion. For all extreme points $x$ of $C$, $A(x)$ belongs to $K \cap L_K$, and 
therefore, $x=\pi(A(x)) \in \pi(K \cap L_K)$. Since $C =
\conv(\ext(C))$ and $\pi(K \cap L_K)$ is convex, $C \subseteq \pi(K \cap L_K)$.
\end{proof}

The asymmetry in the two directions of Theorem~\ref{thm:general_Yannakakis} disappears for many nice cones 
including $\RR^k_+$ and $\PSD^k$. For more on this, see \cite[Corollary 1]{GPT-lifts}. In these nice cases, 
$C$ has a $K$-lift if and only if $S_C$ has a $K$-factorization. 
Theorem~\ref{thm:general_Yannakakis} generalizes the 
original factorization theorem of Yannakakis for polyhedral lifts of polytopes \cite[Theorem 3, \S4]{Yannakakis} to 
arbitrary cone lifts of convex sets. 

Recall that in the psd cone $\PSD^k$, the inner product $\langle A, B \rangle = \textup{trace}(AB)$.

\begin{example} \label{ex:disc}
The unit disk $C \subset \RR^2$ is a spectrahedron in $\PSD^2$ as follows 
\[
C=\left\{
(x,y) \in \RR^2 : \left( \begin{array}{cc}
1+x & y \\
y & 1-x
\end{array}\right) \succeq 0
\right\}, 
\]
and hence trivially has a $\PSD^2$-lift.
This means that the slack operator $S_C$ must have a $\PSD^2$ factorization. Since $C^\circ=C$, $\textup{ext}(C) = \textup{ext}(C^\circ) = \partial C$, and so we have to find maps $A,B: \textup{ext}(C) \rightarrow \PSD^2$ such
that for all $(x_1,y_1), (x_2,y_2) \in \ext(C)$,
\[
\left<A(x_1,y_1),B(x_2,y_2)\right> = 1 -x_1x_2 - y_1y_2.
\] 
This is accomplished by the maps
\[
A(x_1,y_1)=\left( \begin{array}{cc}
1+x_1 & y_1 \\
y_1 & 1-x_1
\end{array}\right)
\]
and
\[
B(x_2,y_2)=\frac{1}{2}\left( \begin{array}{cc}
1-x_2 & -y_2 \\
-y_2 & 1+x_2
\end{array}\right)
\]
which factorize $S_C$ and are positive semidefinite in their domains. 
\qed
\end{example}

\begin{example}
Consider the spectrahedral lift of the unit ball $B_{n^2}$ from Example~\ref{ex:ball}. Again, we have that
$\textup{ext}(B_{n^2}) = \textup{ext}(B_{n^2}^\circ) = \partial B_{n^2}$. 
The maps $$A(x) = 
\begin{pmatrix} 
XX^\top & X \\ X^\top & I_n
\end{pmatrix}, \,\,\,\,
B(y) = \frac{1}{2}
\begin{pmatrix} 
I_n & -Y \\ -Y^\top & YY^\top
\end{pmatrix}$$
where $X$ is defined as in Example~\ref{ex:ball} and $Y$ is defined the same way, 
offer a $\PSD^{2n}$-factorization of the slack operator of $B_{n^2}$.  \qed
\end{example}

The existence of cone lifts of convex bodies is preserved under many geometric operations \cite[Propositions 1 and 2]{GPT-lifts}. 
For instance, if $C$ has a $K$-lift, then so does any compact image of $C$ under a projective transformation. An  
elegant feature of this theory is that the existence of lifts is invariant under polarity/duality; $C$ has a $K$-lift if and only if $C^\circ$ has a $K^\ast$-lift. In particular, if $C$ has a spectrahedral lift of size $k$, then so does $C^\circ$.
\section{The Factorization Theorem for Polytopes}
\label{sec:factorization for polytopes}

When the convex body $C$ is a polytope, 
Theorem~\ref{thm:general_Yannakakis} becomes rather simple. This specialization also 
appeared in \cite{FMPTW}. 

\begin{definition}
Let $P$ be a full-dimensional polytope in $\RR^n$ with vertex set 
$V_P=\{p_1,\ldots,p_v\}$ and an irredundant inequality representation
\[
P=\{ x \in \RR^n : h_1(x) \geq 0 , \ldots, h_f(x) \geq 0\}.
\]
Since $P$ is a convex body, we may assume that the constant in each $h_j(x)$ is $1$.
The {\em slack matrix of $P$} is the nonnegative $v \times f$ matrix whose $(i,j)$-entry
is $h_j(p_i)$, the {\em slack} of vertex $p_i$ in the facet inequality $h_j(x) \geq 0$.
\end{definition}

When $P$ is a polytope, 
$\ext(P)$ is just $V_P$, 
and $\ext(P^\circ)$ is in bijection with $F_P$, the set of facets of $P$. 
The facet $F_j$ is defined by $h_j(x) \geq 0$ and $f := \left|F_P\right|$.
Then the slack operator $S_P$ is the map from 
$V_P \times F_P$ to $\RR_+$ that sends the vertex facet pair $(p_i,F_j)$ to
$h_j(p_i)$.  Hence, we may identify the slack operator of $P$ with
the slack matrix of $P$ and use $S_P$ to also denote this
matrix.  Since the facet inequalities of $P$ are only unique up to multiplication by positive scalars, the matrix 
$S_P$ is also only unique up to multiplication of its columns by positive scalars. Regardless, we will call $S_P$, derived from the given presentation of $P$, {\em the} slack 
matrix of $P$. 

\begin{definition} 
\label{def:K-factorization of nonnegative matrices}
Let $M = (M_{ij}) \in \RR_+^{p \times q}$ be a nonnegative matrix and $K$ a
closed convex cone. Then a $K$-{\em factorization} of $M$ is a pair of
ordered sets $\{a^1, \ldots, a^p\} \subset K$ and $\{b^1, \ldots, b^q\} \subset K^*$
such that $\langle a^i, b^j \rangle = M_{ij}$.
\end{definition}

Note that $M \in \RR^{p \times q}_+$ has a $\RR_+^k$-factorization if and only
if there exist a $p \times k$ nonnegative matrix $A$ and a $k \times
q$ nonnegative matrix $B$ such that $M=AB$, called a {\em nonnegative factorization} of $M$.  Definition~\ref{def:K-factorization of nonnegative matrices} 
generalizes nonnegative factorizations of nonnegative matrices to cone factorizations.

\begin{theorem} \label{thm:general_Yannakakis_for_polytopes}
If a full-dimensional polytope $P$ has a proper $K$-lift then every slack matrix of $P$ admits a 
$K$-factorization. Conversely, if some slack matrix of $P$ has a
$K$-factorization then $P$ has a $K$-lift.
\end{theorem}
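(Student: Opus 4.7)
The plan is to reduce Theorem~\ref{thm:general_Yannakakis_for_polytopes} to Theorem~\ref{thm:general_Yannakakis} by identifying the slack operator of $P$ (as an abstract convex body) with a slack matrix of $P$ (as a polytope). Two ingredients are needed: an identification of the extreme points of $P^\circ$ with the facets of $P$, and a verification that different slack matrices of $P$ transfer $K$-factorizations to each other.

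First, since $P$ is a full-dimensional polytope containing the origin in its interior, $\ext(P) = V_P = \{p_1, \ldots, p_v\}$ is finite, and $P^\circ$ is again a polytope whose extreme points are in bijection with the facets of $P$. Concretely, writing the given irredundant facet presentation in the normalized form $h_j(x) = 1 - \langle y_j, x \rangle \geq 0$, the points $y_1, \ldots, y_f$ are precisely the extreme points of $P^\circ$. Under this identification, the slack operator evaluated at $(p_i, y_j)$ becomes
\[
S_P(p_i, y_j) = 1 - \langle p_i, y_j \rangle = h_j(p_i),
\]
which is the $(i,j)$ entry of the slack matrix of $P$ for this normalized presentation. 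Hence, for this presentation, the abstract slack operator literally \emph{is} the combinatorial slack matrix.

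Second, I would observe that any other slack matrix of $P$ arises from this one by rescaling each column by a positive scalar $\lambda_j > 0$, corresponding to replacing $h_j$ by $\lambda_j h_j$. Since $K^\ast$ is a cone, if $\{a^i\} \subset K$ and $\{b^j\} \subset K^\ast$ give a $K$-factorization of one slack matrix, then $\{a^i\}$ and $\{\lambda_j b^j\}$ give one for any rescaled version, and vice versa. Thus, possessing a $K$-factorization is a property of $P$ that is independent of the particular slack matrix chosen.

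With these two ingredients in place, both directions follow directly from Theorem~\ref{thm:general_Yannakakis}. For the forward direction, a proper $K$-lift of $P$ yields a $K$-factorization of the slack operator $S_P$, i.e., of the normalized slack matrix, and then of every slack matrix by the rescaling argument. For the converse, a $K$-factorization of some slack matrix gives one of the normalized matrix, hence of $S_P$, and Theorem~\ref{thm:general_Yannakakis} produces a $K$-lift. There is no real obstacle in this argument; the only point that warrants care is the bijection between $\ext(P^\circ)$ and facets of $P$, together with the uniformization of constants to $1$, which is the standard duality for polytopes containing the origin in their interiors.
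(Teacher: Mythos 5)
Your proposal is correct and matches the paper's approach: the paper also treats Theorem~\ref{thm:general_Yannakakis_for_polytopes} as a direct translation of Theorem~\ref{thm:general_Yannakakis}, identifying $\ext(P)$ with $V_P$, $\ext(P^\circ)$ with $F_P$ via the normalized facet inequalities, and noting that the slack matrix is determined only up to positive column scaling. You simply spell out the rescaling-invariance argument (absorbing $\lambda_j>0$ into $b^j\in K^\ast$) that the paper leaves implicit.
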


Theorem~\ref{thm:general_Yannakakis_for_polytopes} is a direct translation of 
Theorem~\ref{thm:general_Yannakakis} 
using the identification between the slack operator of $P$ and the slack
matrix of $P$. The original theorem of Yannakakis \cite[Theorem~3, \S4]{Yannakakis} proved this result 
in the case where $K = \RR_+^k$.

\begin{example} \label{ex:hexagon}
Consider the regular hexagon with inequality description 
$$ H=\left \{ (x_1,x_2) \in \RR^2 \,:\, \left(\begin{array}{cc}
1 & \sqrt{3}/3 \\
0 & 2\sqrt{3}/3 \\
-1 & \sqrt{3}/3 \\
-1 & -\sqrt{3}/3 \\
0 & -2\sqrt{3}/3 \\
1 & -\sqrt{3}/3 
\end{array}
\right)
\left(\begin{array}{c}
x_1 \\
x_2
\end{array}
\right) \leq 
\left(
\begin{array}{c}
1 \\
1\\
1 \\  
 1 \\
 1\\
 1
\end{array}
\right) \right \}. $$ 
We will denote the coefficient matrix by $F$ and the right hand side
vector by $d$.  It is easy to check that $H$ cannot be the projection of an 
affine slice of $\RR^k_+$ for $k < 5$. Therefore, we ask whether it can be the linear image 
of an affine slice of $\RR_+^5$. Using
Theorem~\ref{thm:general_Yannakakis_for_polytopes} this is equivalent to asking if 
the slack matrix of the hexagon, 
$$ S_H := \left( \begin{array}{cccccc} 
0 & 0 & 1 & 2 & 2 & 1 \\
1 & 0 & 0 & 1 & 2 & 2\\
2 & 1 & 0 & 0 & 1 & 2 \\
2 & 2 & 1 & 0 & 0 & 1 \\
1 & 2 & 2 & 1 & 0 & 0 \\
0 & 1 & 2 & 2 & 1 & 0
\end{array} \right),$$
has a $\RR_+^5$-factorization. Check that 
$$S_H = 
\left( \begin {array}{ccccc} 1&0&1&0&0\\\noalign{\medskip}1&0&0&0&1
\\\noalign{\medskip}0&0&0&1&2\\\noalign{\medskip}0&1&0&0&1\\\noalign{\medskip}0
&1&1&0&0\\\noalign{\medskip}0&0&2&1&0\end {array} \right) 
\left( \begin {array}{cccccc} 0&0&0&1&2&1\\\noalign{\medskip}1&2&1&0&0&0
\\\noalign{\medskip}0&0&1&1&0&0\\\noalign{\medskip}0&1&0&0&1&0
\\\noalign{\medskip}1&0&0&0&0&1\end {array} \right),$$
where we call the first matrix $A$ and the second matrix $B$. We may take the rows of $A$ 
as elements of $\RR^5_+$, and the columns of $B$ as elements of $\RR_+^5 = (\RR^5_+)^*$, and they provide us a $\RR_+^5$-factorization
of the slack matrix $S_H$, proving that this hexagon has a $\RR^5_+$-lift while the trivial polyhedral lift would have been to $\RR^6_+$.

We can construct the lift using the proof of the Theorem~\ref{thm:general_Yannakakis}. Note that 
$$H=\{(x_1,x_2) \in \RR^2 : \exists \,\,y \in \RR_+^5 \textrm{ s.t. } F x + B^T y = d\}.$$
Hence, the exact slice of $\RR_+^5$ that is mapped to the hexagon is simply
$$\{y \in \RR_+^5 : \exists \,\,x \in \RR^2 \textrm{ s.t. }  B^T y = d - F x\}.$$
By eliminating the $x$ variables in the system we get 
$$\{ y \in \RR^5_+ \,:\, y_1 + y_2 + y_3 + y_5 = 2, y_3 + y_4 + y_5 = 1 \},$$
and so we have a three dimensional slice of $\RR^5_+$ projecting down to $H$. This projection is visualized
in Figure \ref{fig:hexagon_proj}.

\begin{figure} 
\begin{center}
\includegraphics[scale=0.4]{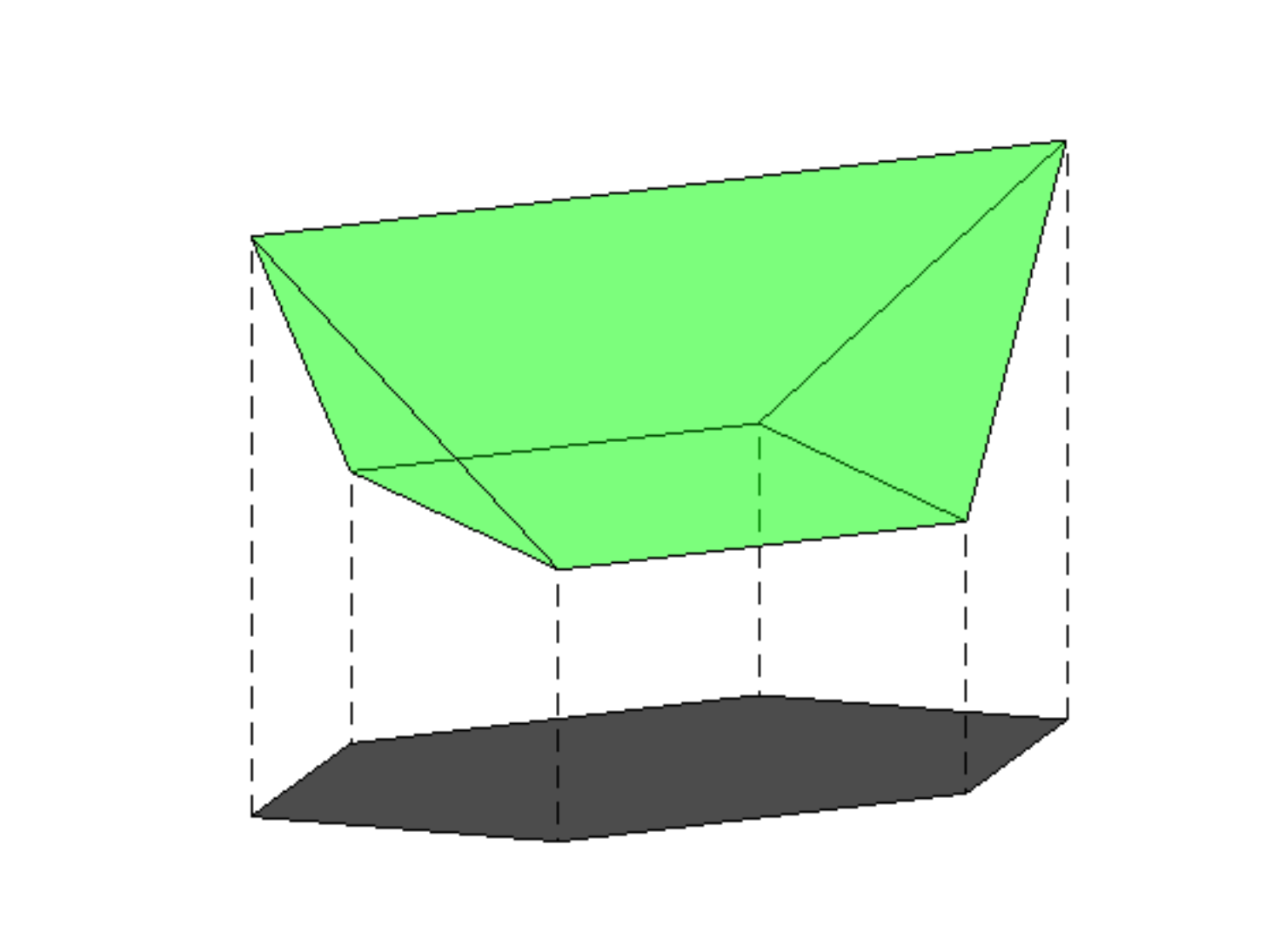}
\end{center}
\caption{A $\RR^5_+$-lift of the regular hexagon. \label{fig:hexagon_proj}
}
\end{figure}

The hexagon is a good example to see that the existence of lifts depends on more than the combinatorics of the polytope.
If instead of a regular hexagon we take the hexagon with vertices $(0,-1)$, $(1,-1)$, $(2,0)$, $(1,3)$, $(0,2)$ 
and $(-1,0)$, a valid slack matrix would be
$$ S := \left(
\begin{array}{cccccc}
 0 & 0 & 1 & 4 & 3 & 1 \\
 1 & 0 & 0 & 4 & 4 & 3 \\
 7 & 4 & 0 & 0 & 4 & 9 \\
 3 & 4 & 4 & 0 & 0 & 1 \\
 3 & 5 & 6 & 1 & 0 & 0 \\
 0 & 1 & 3 & 5 & 3 & 0
\end{array}
\right).$$
One can check that if a $6 \times 6$ matrix with the zero pattern of a slack matrix of a hexagon has a $\RR^5_+$-factorization, then it has a factorization with either the same zero pattern as the matrices $A$ and $B$ obtained
before, or the patterns given by applying a cyclic permutation to the rows of $A$ and the columns of $B$.
A simple algebraic computation then shows that the slack matrix $S$ above has no such decomposition hence this irregular hexagon has no $\RR_+^5$-lift. \qed
\end{example}

\begin{example}
In Example~\ref{ex:square} we saw a $\PSD^3$-lift of a square $P$. Up to scaling of columns by positive numbers, 
the slack matrix of $P$ is 
$$S_P = \begin{pmatrix} 0 & 0 & 1 & 1 \\ 0 & 1 & 1 & 0 \\1 & 1 & 0 & 0 \\ 1 & 0 & 0 & 1 \end{pmatrix} $$
where the rows are associated to the vertices $(1,1), \,(1,-1),\,(-1,-1), \,(-1,1)$ in that order, and the columns to the facets defined 
by the inequalities $$1-x_1 \geq 0, \,\,1 - x_2 \geq 0, \,\,1+x_1 \geq 0, \,\,1+x_2 \geq 0.$$

The matrix $S_P$ 
admits the following $\PSD^3$-factorization where the first four matrices are associated 
to the rows of $S_P$ and the next four matrices are associated to the columns of $S_P$:
$$ 
\begin{pmatrix} 1 & 1 & 1\\ 1 & 1 & 1 \\ 1 & 1 & 1 \end{pmatrix},
\begin{pmatrix} 1 & 1 & -1 \\ 1 & 1 & -1 \\ -1 & -1 & 1 \end{pmatrix},
\begin{pmatrix} 1 & -1 & -1 \\ -1 & 1 & 1 \\ -1 & 1 & 1 \end{pmatrix},
\begin{pmatrix} 1 & -1 & 1 \\ -1 & 1 & -1\\ 1 & -1 & 1 \end{pmatrix}
$$ 
$$ 
\frac{1}{4}\begin{pmatrix} 1 & -1 & 0 \\ -1 & 1 & 0 \\ 0 & 0 & 0 \end{pmatrix},
\frac{1}{4}\begin{pmatrix}  1 & 0 & -1 \\ 0 & 0 & 0 \\ -1 & 0 & 1 \end{pmatrix},
\frac{1}{4}\begin{pmatrix} 1 & 1 & 0 \\ 1 & 1 & 0 \\ 0 & 0 & 0 \end{pmatrix},
\frac{1}{4}\begin{pmatrix} 1 & 0 & 1 \\ 0 & 0 & 0 \\ 1 & 0 & 1 \end{pmatrix}.
$$
\qed
\end{example}

\section{Positive Semidefinite Rank}
\label{sec:psd rank} 

From now on we focus on the special case of spectrahedral lifts of convex sets. Since the family of psd cones $\{ \PSD^k \,:\, k \in \NN\}$ is closed in the sense that any face of a member $\PSD^i$ in the family is isomorphic to $\PSD^j$ for some $j \leq i$, we can look at the smallest index $k$ for which a convex set $C$ admits a $\PSD^k$-lift.

\begin{definition} \label{def:psd rank}
The {\em psd rank} of a convex set $C \subset \RR^n$, denoted as $\psdrank(C)$ is the smallest positive integer  $k$ such that $C = \pi(\PSD^k \cap L)$ for some affine space $L$ and linear map $\pi$. If $C$ does not admit a psd lift, then define 
$\psdrank(C) = \infty$.
\end{definition} 

The following lemma is immediate from the previous sections and offers an explicit tool for establishing psd ranks. 

\begin{lemma} \label{lem:psdrank of matrices}
The psd rank of a convex set $C$ is the smallest $k$ for which the slack operator $S_C$ admits a $\PSD^k$-factorization. If $P$ is a polytope, then $\psdrank(P)$ is the smallest integer $k$ for which the slack matrix $S_P$ admits a $\PSD^k$-factorization.
\end{lemma}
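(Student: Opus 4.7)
The plan is to specialize the factorization theorems (Theorem~\ref{thm:general_Yannakakis} for convex bodies and Theorem~\ref{thm:general_Yannakakis_for_polytopes} for polytopes) to the concrete choice $K = \PSD^k$. Since the psd cone is self-dual, a $\PSD^k$-factorization produces maps $A:\ext(C)\to\PSD^k$ and $B:\ext(C^\circ)\to\PSD^k$ satisfying $S_C(x,y)=\textup{trace}(A(x)B(y))$. The reverse direction is then immediate: if $S_C$ admits a $\PSD^k$-factorization, Theorem~\ref{thm:general_Yannakakis} yields a $\PSD^k$-lift of $C$, so $\psdrank(C)\leq k$. Applied to the smallest such $k$, this gives one of the two needed inequalities.

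The forward direction requires more care because Theorem~\ref{thm:general_Yannakakis} only produces a factorization from a \emph{proper} lift. So I would start from a $\PSD^k$-lift $Q = \PSD^k\cap L$ of $C$ with $k=\psdrank(C)$, and split into two cases. If the lift is proper, Theorem~\ref{thm:general_Yannakakis} directly gives a $\PSD^k$-factorization of $S_C$. If it is not proper, then $L$ misses the interior of $\PSD^k$, so $Q$ is contained in some proper face of $\PSD^k$. The key structural fact I will use is that every face of $\PSD^k$ is linearly isomorphic to $\PSD^j$ for some $j\leq k$. Using this isomorphism, I reinterpret $Q$ as a proper $\PSD^j$-lift of $C$, apply Theorem~\ref{thm:general_Yannakakis} to obtain a $\PSD^j$-factorization of $S_C$, and then pad the factors by the block-diagonal embedding $X\mapsto\diag(X,0)\in\PSD^k$, which preserves the trace inner product. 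This produces a $\PSD^k$-factorization of $S_C$, completing the first inequality.

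For the polytope statement, the argument is identical except that Theorem~\ref{thm:general_Yannakakis_for_polytopes} is used in place of Theorem~\ref{thm:general_Yannakakis}, and the identification of the slack operator with the slack matrix (already noted in Section~\ref{sec:factorization for polytopes}) turns $\PSD^k$-factorizability of the operator into $\PSD^k$-factorizability of the matrix in the sense of Definition~\ref{def:K-factorization of nonnegative matrices}.

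The main obstacle is the non-proper lift case, which is what creates the asymmetry in Theorem~\ref{thm:general_Yannakakis}. Resolving it cleanly hinges on the fact that $\{\PSD^k\}_k$ is closed under taking faces up to isomorphism, a property already highlighted in the paragraph preceding Definition~\ref{def:psd rank}; alternatively, one can simply invoke the remark after Theorem~\ref{thm:general_Yannakakis} that the two directions coincide for nice cones such as $\PSD^k$, in which case the lemma becomes an immediate corollary.
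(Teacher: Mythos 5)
Your proposal is correct and matches the paper's intent: the paper treats this lemma as ``immediate from the previous sections,'' meaning exactly Theorem~\ref{thm:general_Yannakakis}, the remark following it that the asymmetry disappears for nice cones like $\PSD^k$, and the observation before Definition~\ref{def:psd rank} that faces of $\PSD^k$ are isomorphic to smaller psd cones --- the very ingredients you invoke. One small precision worth adding: to guarantee that the reinterpreted lift is \emph{proper}, take the \emph{minimal} face of $\PSD^k$ containing $Q$, so that $Q$ meets its relative interior; and note that at $k=\psdrank(C)$ the non-proper case actually yields a $\PSD^j$-lift with $j<k$, contradicting minimality, so the minimal-size lift is automatically proper and the padding step is unnecessary.
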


Following Definition~\ref{def:K-factorization of nonnegative matrices}, for any nonnegative matrix $M \in \RR^{p \times q}_+$, one can define $\psdrank(M)$ to be the smallest integer $k$ such that $M$ admits a $\PSD^k$-factorization. The relationship between 
$\psdrank(M)$ and $\rank(M)$ is as follows:

\begin{align} \label{eq:bounds on psd rank}
\frac{1}{2} \left( \sqrt{1+8 \,\rank(M)} -1  \right) \leq \psdrank(M) \leq \min\{p,q\}.
\end{align}

For a proof, as well as a comprehensive comparison between psd rank and several other notions of rank 
of a nonnegative matrix, see \cite{psdranksurvey}.

The goal of this section is to describe the known bounds on psd ranks of convex sets. As might be expected, the best results we have are for polytopes.

\subsection{Polytopes}
In the case of polytopes, there is a simple lower bound on psd rank. The proof relies on the following 
technique to increase the psd rank of a matrix by one.

\begin{lemma} \cite[Proposition~2.6]{GRTpsdminimal} \label{lem:extending rank}
Suppose $M \in \RR^{p \times q}_+$ and $\psdrank (M) = k$. If $M$ is extended to 
$M' = \left( \begin{array}{cc} M & {\bf 0} \\ w & \alpha \end{array} \right)$ 
where $w \in \RR_+^q$, $\alpha > 0$ and ${\bf 0}$ is a column of zeros, then $\psdrank (M') = k+1$. Further, the factor 
associated to the last column of $M'$ in any $\PSD^{k+1}$-factorization of $M'$ has rank one.
\end{lemma}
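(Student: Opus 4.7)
The plan is to treat the two inequalities $\psdrank(M') \leq k+1$ and $\psdrank(M') \geq k+1$ separately, and to extract the rank-one statement from the tight case of the lower bound.

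For the easy direction, $\psdrank(M') \leq k+1$, I would start from any $\PSD^k$-factorization $\{A_i\}_{i=1}^p, \{B_j\}_{j=1}^q$ of $M$ and enlarge each factor by one row and column so that it sits in $\PSD^{k+1}$, using the new diagonal slot to encode the extra data:
\[
A'_i = \begin{pmatrix} A_i & 0 \\ 0 & 0 \end{pmatrix}, \,\,\, B'_j = \begin{pmatrix} B_j & 0 \\ 0 & w_j \end{pmatrix}, \,\,\, A'_{p+1} = \begin{pmatrix} 0 & 0 \\ 0 & 1 \end{pmatrix}, \,\,\, B'_{q+1} = \begin{pmatrix} 0 & 0 \\ 0 & \alpha \end{pmatrix}.
\]
Positivity of $w_j$ and $\alpha$ makes all of these psd, and the four block computations of $\langle A'_i, B'_j\rangle$ recover the entries of $M'$.

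For the lower bound together with the rank-one claim, suppose $\{A'_i\}, \{B'_j\}$ is any $\PSD^r$-factorization of $M'$ with $r \leq k+1$. The essential observation is that the zeros in the last column of $M'$ give $\langle A'_i, B'_{q+1}\rangle = 0$ for every $i \leq p$; the standard fact that a vanishing trace inner product of two psd matrices forces their matrix product to vanish then gives $A'_i B'_{q+1} = 0$, equivalently $\textup{range}(A'_i) \subseteq \ker(B'_{q+1})$ for all $i \leq p$. Let $d := \rank(B'_{q+1})$; the corner slack $\langle A'_{p+1}, B'_{q+1}\rangle = \alpha > 0$ forces $d \geq 1$.

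Now I would pick an orthonormal basis whose first $r - d$ vectors span $\ker(B'_{q+1})$. In these coordinates each $A'_i$ with $i \leq p$ is supported entirely in its top-left $(r-d) \times (r-d)$ block (write $A'_i = V V^\top$ and note each column of $V$ lies in $\ker(B'_{q+1})$). Extracting the top-left $(r-d) \times (r-d)$ principal blocks of $\{A'_i\}_{i \leq p}$ and $\{B'_j\}_{j \leq q}$ yields a $\PSD^{r-d}$-factorization of $M$, so $k = \psdrank(M) \leq r - d \leq (k+1) - 1 = k$, and equality throughout forces $r = k+1$ and $d = 1$. This simultaneously delivers $\psdrank(M') = k+1$ and rank one for the factor of the last column. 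The main obstacle is the orthogonal-range step, where the psd structure (rather than mere nonnegativity) is genuinely used; once it is in hand, the remainder is a dimension count on ranges combined with a single orthonormal change of basis.
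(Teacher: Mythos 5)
Your proof is correct, and it follows essentially the same strategy as the paper's: the easy upper bound via block-diagonal padding, and the lower bound together with the rank-one conclusion by using the fact that $\langle A,B\rangle = 0$ for psd $A,B$ forces $AB=0$, then changing to a basis adapted to $\ker(B'_{q+1})$ and counting dimensions. The dimension count $k \leq r-d$ with $d\geq 1$ cleanly yields both $r\geq k+1$ and, in the tight case $r=k+1$, the rank-one claim $d=1$.
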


\begin{theorem} \cite[Proposition~3.2]{GRTpsdminimal} \label{thm:lower bound on psd rank for polytopes}
If $P \subset \RR^n$ is a full-dimensional polytope, then the psd rank of $P$ is at least $n+1$. If 
$\psdrank (P) = n+1$, then {\em every} $\PSD^{n+1}$-factorization of the slack matrix of $P$ only uses rank one matrices as factors.
\end{theorem}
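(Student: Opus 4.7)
The strategy is to locate inside $S_P$ an $(n+1)\times(n+1)$ submatrix (after suitable reordering of rows and columns) that is lower triangular with strictly positive diagonal, and then apply Lemma~\ref{lem:extending rank}. Iterating that lemma $n$ times starting from a $1\times 1$ positive scalar shows that any such matrix has psd rank exactly $n+1$; since psd rank only decreases on submatrices, this yields $\psdrank(S_P)\ge n+1$ and hence $\psdrank(P)\ge n+1$ via Lemma~\ref{lem:psdrank of matrices}.

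The key combinatorial claim, to be proved by induction on $n$, is: for every full-dimensional $n$-polytope $P$ and every facet $F$ of $P$, the slack matrix $S_P$ contains an $(n+1)\times(n+1)$ submatrix that, after reordering, is lower triangular with positive diagonal and has $F$ as its last column. The base case $n=1$ is a direct check on a segment. For the inductive step, fix $F$ and pick any vertex $v$ of $P$ not on $F$ (which exists because $F$ is a proper face), and view $F$ as a full-dimensional $(n-1)$-polytope. The inductive hypothesis, applied to $F$ and any facet of it, furnishes vertices $v_1,\dots,v_n$ of $F$ and facets $F'_1,\dots,F'_n$ of $F$ whose submatrix of $S_F$ is lower triangular with positive diagonal. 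Each $F'_i=H_i\cap F$ for some facet $H_i\neq F$ of $P$, and since $v_i\in F$, one has $v_i\in H_i$ iff $v_i\in F'_i$, so the zero pattern carries over verbatim to the submatrix of $S_P$ on rows $v_1,\dots,v_n$ and columns $H_1,\dots,H_n$. Appending $v$ as an extra bottom row and $F$ as an extra last column then produces an $(n+1)\times(n+1)$ lower triangular submatrix of $S_P$ with positive diagonal: the new column has zeros above the bottom-right entry (because $v_1,\dots,v_n\in F$) and a strictly positive entry in the corner (because $v\notin F$). I expect the main obstacle to be precisely this inductive step, namely the careful lifting of the triangular zero pattern from $S_F$ to $S_P$ through the correspondence $F'_i\leftrightarrow H_i$.

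For the second statement, fix any $\PSD^{n+1}$-factorization $\{A_i\},\{B_j\}$ of $S_P$ and any facet $F$. Restricting this factorization to the submatrix furnished by the claim, with $F$ as last column, yields a $\PSD^{n+1}$-factorization of a matrix of psd rank $n+1$; applying Lemma~\ref{lem:extending rank} with the top-left $n\times n$ lower triangular block playing the role of $M$ then forces the factor of the last column, namely $B_F$, to have rank one. Since $F$ was arbitrary, every $B_j$ has rank one. To obtain the same conclusion for the vertex factors, I would rerun the argument on the polar $P^\circ$, whose slack matrix equals $S_P^\top$ (after matching normalizations of facet inequalities, which affects neither psd rank nor ranks of factors) and whose facets are in bijection with the vertices of $P$. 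The claim applied to $P^\circ$ and then transposed supplies, for each vertex $v$ of $P$, an $(n+1)\times(n+1)$ upper triangular submatrix of $S_P$ with positive diagonal having $v$ as its last row; the transpose of Lemma~\ref{lem:extending rank} then forces $A_v$ to have rank one.
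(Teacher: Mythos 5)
Your proof is correct and follows the same basic skeleton as the paper's: induction on $n$ via a facet $F$, a vertex $v\notin F$, Lemma~\ref{lem:extending rank} to increase the psd rank by one, and passing to the polar for the vertex factors. The one place you genuinely diverge is the inductive hypothesis. The paper inducts directly on the numerical statement $\psdrank(S_F)\geq n$; to apply it, one must observe that the slack matrix of the facet polytope $F$ appears (up to positive scaling of columns) as a submatrix of $S_P$, and that psd rank is scaling-invariant. You instead strengthen the IH to a purely combinatorial statement — the existence of an $(n+1)\times(n+1)$ lower triangular submatrix of $S_P$ with positive diagonal and any prescribed facet in the last column — and then derive the psd rank bound by iterating Lemma~\ref{lem:extending rank} $n$ times from a $1\times 1$ block. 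This is more explicit, and it neatly sidesteps the slack-matrix-of-a-facet-vs-submatrix identification, since you only need the zero pattern of the $S_F$ submatrix to carry over to $S_P$, which is immediate from $F'_i = H_i\cap F$. The paper's version is shorter because it leans on the abstract IH and absorbs the scaling bookkeeping into the phrase ``slack matrix $S_F$,'' while yours is longer but entirely elementary at the level of incidences. Both are valid, and the second halves (rank-one factors for the facet columns via the restricted factorization, then the polar for the vertex rows) are essentially identical.
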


\begin{proof}
The proof is by induction on $n$. If $n=1$, then $P$ is a line segment and we may assume that its vertices are 
$p_1, p_2$ and facets are $F_1, F_2$ with $p_1 = F_2$ and $p_2 = F_1$. Hence its slack matrix is a $2 \times 2$ diagonal matrix with positive diagonal entries. It is not hard to see that  $\psdrank (S_P) = 2$ 
and any $\PSD^{2}$-factorization of it uses only matrices of rank one.

Assume the first statement in the theorem holds up to dimension $n-1$ and consider a polytope $P \subset \RR^n$ of dimension $n$. Let $F$ be a facet of $P$ with vertices $p_1, \ldots, p_s$, facets $f_1, \ldots, f_t$ and slack matrix $S_F$. Suppose $f_i$ corresponds to facet $F_i$ of $P$ for $i=1, \ldots, t$. By induction hypothesis, $\psdrank (F) = 
\psdrank (S_F) \geq n$. Let $p$ be a vertex of $P$ not in $F$ and assume that the top left $(s+1) \times (t+1)$ submatrix of $S_P$ is indexed by $p_1, \ldots, p_s, p$ in the rows and $F_1, \ldots, F_t, F$ in the columns. Then this submatrix of $S_P$, which we will call $S_F'$, has the form 
$$ S_F' = \left( \begin{array}{cc} S_F & {\bf 0} \\ * & \alpha \end{array} \right)$$ 
with $\alpha > 0$.  By Lemma~\ref{lem:extending rank}, the psd rank of $S_F'$ is at least $n+1$ since the psd rank of $S_F$ is at least $n$. Hence, $\psdrank (P) = \psdrank (S_P) \geq n+1$. 

Suppose there is now a $\PSD^{n+1}$-factorization of $S_P$ and therefore of $S_F'$. By 
Lemma~\ref{lem:extending rank} the factor 
corresponding to the facet $F$ has rank one. Repeating the procedure for all facets $F$ and all submatrices $S_F'$ we get that all factors corresponding to the facets of $P$ in this $\PSD^{n+1}$-factorization of $S_P$ must have rank one. To prove that all factors indexed by the vertices of $P$ also have rank one, we use the fact that the transpose of a slack matrix of $P$ is (up to row scaling) a slack matrix of the polar polytope $P^\circ$, concluding the proof.
\end{proof}

For an $n$-dimensional polytope $P \subset \RR^n$, it is well-known that $\rank(S_P) = n+1$, see for instance \cite[Lemma~3.1]{GRTpsdminimal}. 
Therefore, Theorem~\ref{thm:lower bound on psd rank for polytopes} implies that for a
slack matrix $S_P$ of a polytope $P$ we have a simple relationship between rank and psd rank, namely 
$\rank(S_P) \leq \psdrank(P)$, as compared to \eqref{eq:bounds on psd rank}. 
From \eqref{eq:bounds on psd rank} we also have that for a polytope $P$ with $v$ vertices and $f$ facets, \
$\psdrank(P) \leq \min\{v,f\}$. 
In general, it is not possible to bound the psd rank of nonnegative matrices, even slack matrices, by a function in the rank of the matrix. For instance, all slack matrices of polygons have rank three. However, we will see as a consequence of  the results in the next subsection that the psd rank of an $n$-gon grows with $n$.

In the next section we will see that the lower bound in Theorem~\ref{thm:lower bound on psd rank for polytopes} can be tight for several interesting classes of polytopes. Such polytopes include some $0/1$-polytopes.
However, Bri\"et, Dadush and Pokutta showed that not all $0/1$-polytopes can have small psd rank.

\begin{theorem} \cite{BrietDadushPokutta}
For any $n \in \ZZ_+$, there exists $U \subset \{0,1\}^n$ such that 
$$ \psdrank(\conv(U)) = \Omega\left( \frac{2^{\frac{n}{4}}}{(n \log n)^\frac{1}{4}} \right).$$
\end{theorem}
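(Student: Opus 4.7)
The plan is a counting (pigeonhole) argument: compare the number of distinct 0/1-polytopes in $\RR^n$ against the number of convex bodies in $\RR^n$ that admit a $\PSD^k$-lift, after discretizing the parameter space of such lifts. If $k$ is too small, the second count is strictly smaller than the first, forcing the existence of some $U \subseteq \{0,1\}^n$ with $\psdrank(\conv(U)) > k$. Structurally this mirrors Rothvoss's approach for lower-bounding the extension complexity of random 0/1-polytopes, now adapted to the psd setting.

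For the lower count, there are $2^{2^n}$ subsets of $\{0,1\}^n$. Quotienting by the $n!$ coordinate permutations still leaves $2^{2^n(1-o(1))}$ distinct 0/1-polytopes. These are mutually \emph{well-separated}: since vertices lie on the integer lattice, any two 0/1-polytopes with different vertex sets are at Hausdorff distance $\Omega(1/\textup{poly}(n))$ from each other, or equivalently, they can be separated by an integer linear functional of polynomially bounded norm.

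For the upper count, a $\PSD^k$-lift of a convex body in $\RR^n$ is encoded by an affine subspace $L \subseteq \Sym^k$ of dimension at most $\binom{k+1}{2}$, together with a linear projection $\pi$ to $\RR^n$. Modulo the $GL_k$ gauge action on $\Sym^k$ and overall scaling, this data is parametrized by $O(k^4 + k^2 n)$ real numbers, which can be uniformly normalized (using that we only care about bodies fitting inside $[0,1]^n$) so that all parameters are of bounded magnitude. After $\epsilon$-discretization, the number of distinct quantized lifts is at most $(C/\epsilon)^{O(k^4 + k^2 n)}$. A Lipschitz estimate on the map ``lift parameters $\mapsto$ projected convex body in Hausdorff distance'' then allows us to take $\epsilon = 1/\textup{poly}(n)$, guaranteeing that distinct 0/1-polytopes land in distinct quantization buckets.

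Putting the two counts together yields
\[
2^{2^n(1-o(1))} \;\leq\; n^{O(k^4 + k^2 n)}.
\]
Taking logarithms gives $k^4 + k^2 n = \Omega(2^n/\log n)$; the $k^4$ term dominates, producing the claimed bound $k = \Omega(2^{n/4}/(n \log n)^{1/4})$. The hardest step, and the main obstacle, is the combined gauge-fixing and Lipschitz analysis: without a carefully normalized canonical form for psd lifts, small perturbations of the lift parameters can induce arbitrarily large perturbations of the projected body, blowing up the $\epsilon$-net count. Pinning down such a canonical form (for instance, requiring the defining matrices $A_i$ to satisfy $\|A_i\| \leq 1$), propagating this bound through the projection $\pi$, and tracking how these normalizations interact with the fixed box $[0,1]^n$ is where the heart of the Briët--Dadush--Pokutta argument lies.
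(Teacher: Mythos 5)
Your overall plan --- counting distinct $0/1$-polytopes against an $\epsilon$-net of discretized $\PSD^k$-lifts and applying pigeonhole --- is indeed the strategy of Bri\"et--Dadush--Pokutta. But there is a concrete quantitative error in the separation estimate, and it is exactly the error that your own arithmetic then exposes.

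You claim two $0/1$-polytopes with different vertex sets are at Hausdorff distance $\Omega(1/\textup{poly}(n))$, ``or equivalently'' separable by an integer functional of polynomially bounded norm. Neither is true. If $v\in\{0,1\}^n$ lies outside $\conv(U')$, the separating hyperplane produced by Cramer's rule from a subset of $U'$ has integer coefficients bounded only by the Hadamard bound $n^{O(n)}=2^{O(n\log n)}$, and the distance from $v$ to $\conv(U')$ can be as small as $2^{-\Theta(n\log n)}$. So the discretization scale must be taken to $\epsilon = 2^{-\Theta(n\log n)}$, not $1/\textup{poly}(n)$. With that correction, the upper count becomes $2^{O(n\log n)\cdot O(k^4+k^2 n)}$, and comparing to $2^{2^n(1-o(1))}$ gives $k^4 \gtrsim 2^n/(n\log n)$, i.e.\ $k=\Omega\bigl(2^{n/4}/(n\log n)^{1/4}\bigr)$ --- the stated theorem. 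With your $1/\textup{poly}(n)$ value of $\epsilon$ the same computation actually yields $k = \Omega\bigl(2^{n/4}/(\log n)^{1/4}\bigr)$, which is stronger than the theorem, a sign that a constant went missing; that missing factor of $n^{1/4}$ is precisely the exponential separation cost.

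The other gap you correctly flag but do not close: the map from lift parameters to the projected body is \emph{not} Lipschitz in any uniform sense, so a naive $\epsilon$-net on a box of parameters does not control the Hausdorff distance of the outputs. Simply normalizing $\|A_i\|\le 1$ is not enough, because the projection $\pi$ and the slice geometry can still amplify perturbations arbitrarily near degeneracies of the spectrahedron. The substance of the Bri\"et--Dadush--Pokutta argument is a normalization lemma showing that when the slack-matrix entries of a $0/1$-polytope are bounded (which they are, being integers of polynomial size), the $\PSD^k$-factorization of the slack matrix can be rescaled so that all factor norms are polynomially bounded, after which the quantization is stable. In your write-up this is acknowledged as ``the heart of the argument'' but treated as an afterthought; without it, the $\epsilon$-net count is not justified.

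Finally, a minor remark: quotienting the count of $U\subseteq\{0,1\}^n$ by the $n!$ coordinate permutations is harmless but unnecessary --- one does not need representatives up to symmetry, only that some $U$ escapes the net.
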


Despite the above result, it is not easy to find explicit polytopes with high psd rank. 
The most striking results we have so far are the following by Lee, Raghavendra and Steurer, which provide super polynomial lower bounds on the psd 
rank of specific families of $0/1$-polytopes.  

\begin{theorem} \cite{LeeRaghavendraSteurer}
The cut, TSP, and stable set polytopes of $n$-vertex graphs have psd rank at least $2^{n^\delta}$, for some constant $\delta > 0$. 
\end{theorem}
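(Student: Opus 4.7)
The plan is to adapt the Fiorini--Massar--Pokutta--Tiwary--de~Wolf blueprint for nonnegative rank to the positive semidefinite setting. By Lemma~\ref{lem:psdrank of matrices}, the psd rank of each of the three polytopes equals the psd rank of its slack matrix, and psd rank is monotone under taking submatrices: a $\PSD^{k}$-factorization of $M$ restricts to a $\PSD^{k}$-factorization of any submatrix of $M$. So it suffices to exhibit, inside the slack matrix of each target polytope, a common submatrix with psd rank at least $2^{n^{\delta}}$.

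First I would concentrate on the correlation polytope $\textup{COR}(n) = \conv\{xx^{\top}: x \in \{0,1\}^{n}\}$, which is affinely isomorphic to a face of the cut polytope $\textup{CUT}(K_{n+1})$. Standard reductions (due to de~Wolf and Fiorini et al., developed for extension complexity) show that $\textup{COR}(n)$ appears, up to affine transformation, as a face or projection of the stable set polytope of a $\mathrm{poly}(n)$-vertex graph and of the TSP polytope of a similarly sized graph; since psd rank is monotone under such operations, any lower bound on $\psdrank(\textup{COR}(n))$ transfers to the three target families. Inside the slack matrix of $\textup{COR}(n)$, I would single out the \emph{unique disjointness} submatrix $M_n$ indexed by pairs of subsets $a,b \subseteq [n]$, with entries proportional to $(1-\langle \chi^{a},\chi^{b}\rangle)^{2}$: its entry vanishes precisely when $|a\cap b|=1$, making it the canonical hard object, the psd analogue of the matrix used in the Fiorini et al.\ nonnegative-rank bound.

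The heart of the argument is to show $\psdrank(M_n)\geq 2^{n^{\delta}}$. The strategy is to convert a hypothetical $\PSD^{k}$-factorization of $M_n$ into a low-complexity approximate representation of the unique-disjointness predicate. Concretely, from factors $A(a),B(b)\succeq 0$ with $\langle A(a),B(b)\rangle \propto (1-|a\cap b|)^{2}$, one uses the spectral decomposition of the $B(b)$ to extract $O(k^{2})$ real-valued ``features'' of $a$ whose inner-product combinations vanish on hard instances and are controlled elsewhere. This is exactly the data of either a low-degree sum-of-squares certificate or a low-cost quantum one-way communication protocol for a hard threshold predicate such as $\mathrm{OR}$ on $n$ bits. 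Known lower bounds on the approximate degree of $\mathrm{OR}$ (or, equivalently, on quantum information complexity of disjointness), combined with a hardness amplification step that boosts $n$ to $n^{\delta}$ while preserving hardness, then force $k\geq 2^{n^{\delta}}$.

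The step I expect to be the main obstacle is precisely this conversion from a psd factorization to a complexity-theoretic certificate with usable parameters. In the nonnegative case, Yannakakis gives an immediate bridge from nonnegative rank to deterministic communication protocols, so the disjointness lower bound applies almost verbatim. In the psd case the factors encode ``entangled'' information, and one must carefully produce an approximate SOS (or quantum) representation whose \emph{degree} and \emph{error} are simultaneously controlled by $k$, so that an existing approximate-degree lower bound can be invoked. Engineering this reduction, and checking that the exponent $\delta>0$ it yields is uniform and survives the polynomial-size reductions to cut, TSP, and stable set polytopes, is the genuine technical novelty of \cite{LeeRaghavendraSteurer} beyond the nonnegative-rank argument of Fiorini et al.
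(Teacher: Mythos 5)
The paper you are reading does not prove this theorem; it is a survey and the statement is only cited to \cite{LeeRaghavendraSteurer}, so there is no in-paper proof to compare against. Evaluating your sketch on its own terms: the outer skeleton is right. Reducing to the correlation polytope using monotonicity of psd rank under faces, affine images, and submatrices (all valid, and all essentially consequences of Lemma~\ref{lem:psdrank of matrices} together with the fact that a $\PSD^k$-factorization restricts), and then lower-bounding the psd rank of the slack submatrix with entries $(1-\langle\chi^a,\chi^b\rangle)^2$ is indeed the shape of the Lee--Raghavendra--Steurer argument, and it parallels the Fiorini et al.\ route for nonnegative rank.

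The gap is in the step you flag as ``the main obstacle,'' and it is not just a matter of engineering: as described, it targets the wrong hard quantity and would not produce an exponential bound. The approximate degree of $\mathrm{OR}_n$ and the quantum communication complexity of set disjointness are both $\Theta(\sqrt{n})$, and more to the point, a $\PSD^k$-factorization does not reduce to a quantum one-way protocol in the clean way that a $\RR^k_+$-factorization reduces to a nondeterministic classical protocol; this is precisely why the psd analogue of Yannakakis's bridge resisted for years. What \cite{LeeRaghavendraSteurer} actually prove is a structural result turning a $\PSD^k$-factorization of the slack matrix into an approximate low-degree \emph{sum-of-squares} representation of an associated nonnegative function on $\{0,1\}^n$, with degree roughly $O(\log k)$ (up to polylogarithmic losses). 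The mechanism is a quantum learning/compression argument (a pretty-good-measurement style analysis) that truncates the psd factors to a low-dimensional subspace while controlling the error uniformly. The exponential lower bound then comes from applying known SOS-degree lower bounds (of Grigoriev type) for explicit hard quadratics over the Boolean cube, not from the approximate degree of $\mathrm{OR}$ or a ``hardness amplification'' of it. So your plan correctly locates where the novelty lives, but misidentifies both the certificate one should extract from the factorization (an approximate SOS representation, not a quantum protocol or an approximating polynomial) and the lower bound one should invoke (SOS degree of hard quadratics, not approximate degree of $\mathrm{OR}$); without those two replacements the argument does not close.
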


We saw the stable set polytope of an $n$-vertex graph before. The cut and TSP polytopes are other examples of polytopes that come from graph optimization problems. The TSP (traveling salesman problem) is the problem of finding a tour through all vertices of the $n$-vertex complete graph that minimizes a linear objective function. Each tour can be represented as a $0/1$-vector in $\{0,1\}^{n \choose 2}$ and the TSP polytope is the convex hull of all these tour vectors.

\subsection{General convex sets}
We now examine lower bounds on the psd rank of an arbitrary convex set $C \subset \RR^n$.  
The following elegant lower bound was established by 
Fawzi and Safey El Din.

\begin{theorem} \cite{Fawzi-SafeyElDin} \label{thm:lower bound on psd rank of convex sets}
Suppose $C \subset \RR^n$ is a convex set and $d$ is the minimum degree of a polynomial with real coefficients that vanishes on the boundary of 
$C^\circ$. Then $\psdrank(C) \geq \sqrt{\log d}$.
\end{theorem}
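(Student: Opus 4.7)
The plan is to convert the hypothesis $\psdrank(C)=k$ into structural information about $\partial C^\circ$ through the factorization theorem, and then invoke an algebraic-geometric degree bound.

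First, I would use the duality property for spectrahedral lifts remarked upon after Theorem~\ref{thm:general_Yannakakis}: since $C$ has a $\PSD^k$-lift, so does $C^\circ$, hence there exist an affine subspace $L \subset \Sym^k$ and a linear map $\pi\colon \Sym^k \to \RR^n$ with
\[
C^\circ \;=\; \pi\bigl(\PSD^k \cap L\bigr).
\]
The relative boundary of the spectrahedron $\PSD^k \cap L$ inside $L$ consists of singular psd matrices in $L$, i.e.\ $\{X \in L \,:\, X \succeq 0,\, \det(X) = 0\}$. A standard convexity argument (if every preimage of a boundary point lay in the relative interior, then $\pi$, restricted to the affine hull of $L$, would map an open neighborhood of such a preimage to an open neighborhood in $\RR^n$ contained in $C^\circ$, contradicting the assumption that the image point is on $\partial C^\circ$) gives
\[
\partial C^\circ \;\subseteq\; \pi\bigl(\{X \in L \,:\, \det(X) = 0\}\bigr).
\]

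Second, I would pass to the Zariski closure: let $V \subset \RR^n$ be the Zariski closure of the right-hand side. Then $V$ is a proper real algebraic subset containing $\partial C^\circ$, and any nonzero polynomial in the vanishing ideal of $V$ vanishes on $\partial C^\circ$. Consequently the quantity $d$ in the theorem is at most the smallest degree of a polynomial vanishing on $V$.

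Third, and most importantly, I would bound this smallest degree by $2^{O(k^2)}$. The set $\{X \in L \,:\, \det(X) = 0\}$ is cut out inside the affine space $L$, of dimension at most $\binom{k+1}{2}$, by the single determinantal polynomial of degree $k$. Producing a polynomial in the ideal of $V$ amounts to eliminating the coordinates in directions transverse to $\pi$, and an effective elimination bound tailored to determinantal loci (using the Thom--Porteous formula or the well-understood stratification of $\PSD^k$ by matrix rank, rather than a generic Bezout estimate) produces a polynomial of degree at most $2^{O(k^2)}$. Combining with the previous step gives $d \leq 2^{O(k^2)}$, which rearranges to $k \geq \sqrt{\log d}$ after absorbing constants. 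The main obstacle is precisely this third step: a naive Bezout application to the projection of a degree-$k$ hypersurface from a space of dimension $\binom{k+1}{2}$ yields only $k^{O(k^2)} = 2^{O(k^2 \log k)}$, so achieving the cleaner exponent requires genuinely exploiting the determinantal origin of the hypersurface and the convex geometry of the spectrahedron.
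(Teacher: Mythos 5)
The paper does not actually prove this theorem; it cites Fawzi and Safey El Din, so there is no in-paper proof to compare against. I will therefore evaluate your proposal on its own terms.

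Your first two steps are essentially sound (the duality reduction, and the observation that fibers over $\partial C^\circ$ must lie in the relative boundary of $\PSD^k \cap L$, hence on the determinantal hypersurface — this does need the lift to be proper, but one can arrange that by replacing $\PSD^k$ by a face if necessary). The gap is in your second-to-third step, where you assert that the Zariski closure $V$ of $\pi\bigl(\{X \in L : \det X = 0\}\bigr)$ is a \emph{proper} real algebraic subset of $\RR^n$. This is false in general and is exactly where the real work lies. The hypersurface $\{X \in L : \det X = 0\}$ has dimension $\dim L - 1$, and $\dim L$ can be as large as $\binom{k+1}{2}$, so its dimension typically exceeds $n$. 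A linear projection of such a set to $\RR^n$ is generically Zariski-dense, and then the vanishing ideal of $V$ is zero — no polynomial of any degree vanishes on it. Your later discussion of whether Bezout gives $2^{O(k^2)}$ versus $2^{O(k^2\log k)}$ is downstream of this: you are trying to bound the degree of a variety that, under your construction, usually does not exist.

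What is missing is a genuine reduction from the full image of the boundary hypersurface to a lower-dimensional algebraic set that still contains $\partial C^\circ$. The right object is not $\pi$ of the whole determinantal locus, but (a Zariski closure of) the image of the \emph{critical/branch locus} of $\pi$ restricted to the rank strata of the determinantal hypersurface — equivalently, the optimal-value correspondence of the parametric SDP $y \mapsto \max\{\langle x, y\rangle : x \in C\}$ arising from the lift. One must argue, using convexity and a Sard-type statement, that every point of $\partial C^\circ$ lies over a point of the spectrahedral boundary at which $\pi$ is critical on the relevant rank stratum, so that $\partial C^\circ$ sits inside the Zariski closure of a set that is \emph{a priori} of dimension $< n$, hence contained in a hypersurface. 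Bounding the degree of that hypersurface is then a question about the algebraic degree of semidefinite programming (Nie--Ranestad--Sturmfels and its refinements), which is where the $2^{O(k^2)}$ bound actually comes from. Without this critical-locus refinement, your argument does not produce any nonzero polynomial, let alone one of the claimed degree.
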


The {\em algebraic degree} of a convex set $C$ is the smallest degree of a polynomial with real coefficients that vanishes on the boundary of $C$. Suppose $P$ is a polytope with $v$ vertices and the origin in its interior. Then $P^\circ$ has $v$ facets each corresponding to a linear polynomial $l_i$ that vanishes on the facet. The polynomial 
$p := \pi_{i=1}^v l_i$ vanishes on the boundary of $P^\circ$ and has degree $v$. In fact, the algebraic degree of $P^\circ$ is $v$. Hence by Theorem~\ref{thm:lower bound on psd rank of convex sets}, $\psdrank(P) \geq \sqrt{\log v}$.  
This result is analogous to an observation of Goemans \cite{Goemans} that any polyhedral lift $Q$ of $P$ has at least $\log v$ facets. The reason is that every vertex in $P$ is the projection of a face of $Q$ which in turn is the intersection of some set of facets of $Q$. Therefore, 
$$ v \leq \# \textup{ faces of } Q \leq 2^{\# \textup{ facets of } Q}.$$  
Even for polytopes there are likely further factors from combinatorics and topology that can provide stronger lower bounds on psd rank.

The lower bound in Theorem~\ref{thm:lower bound on psd rank of convex sets} is very explicit and simple, but it does not involve $n$. We now exhibit a simple lower bound that does.

\begin{proposition} \label{prop:lower bound depending on n}
 Let $C \subset \RR^n$ be an $n$-dimensional convex body. Then \\
$\psdrank(C) = \Omega(\sqrt{n})$.
\end{proposition}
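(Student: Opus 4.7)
The plan is to use a pure dimension count. A $\PSD^k$-lift writes $C$ as a linear image of an affine slice of $\PSD^k$, so the ambient space in which the lift sits has dimension equal to $\dim \Sym^k = \binom{k+1}{2} = k(k+1)/2$. Since a linear map cannot increase dimension, the $n$-dimensional body $C$ forces $\dim(\PSD^k \cap L) \geq n$, hence $n \leq \binom{k+1}{2}$. Solving this inequality for $k$ yields $k \geq \tfrac{1}{2}(\sqrt{8n+1}-1)$, which is $\Omega(\sqrt{n})$.

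Concretely, I would carry this out in three short steps. First, by Definition~\ref{def:psd rank} there exist an affine subspace $L$ in the space $\Sym^k$ of $k \times k$ real symmetric matrices and a linear map $\pi$ such that $C = \pi(\PSD^k \cap L)$. Second, since $C$ is $n$-dimensional and $\pi$ is linear, $\PSD^k \cap L$ must have affine dimension at least $n$; but as a subset of $\Sym^k$, its dimension is at most $\binom{k+1}{2}$. Third, combine these to get
\[
n \;\leq\; \binom{k+1}{2} \;=\; \frac{k(k+1)}{2},
\]
which rearranges to $k \geq \tfrac{1}{2}(\sqrt{8n+1}-1) = \Omega(\sqrt{n})$, where $k = \psdrank(C)$.

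There is really no substantive obstacle here; the argument is a one-line dimension count, and in fact it parallels the elementary upper estimate $\psdrank(M) \leq \min\{p,q\}$ and the rank lower bound in~\eqref{eq:bounds on psd rank}. The only subtlety worth mentioning in the write-up is that the bound applies to any convex body, not just polytopes, which is why it is weaker than the linear-in-$n$ bound from Theorem~\ref{thm:lower bound on psd rank for polytopes}: for general $C$, the body can genuinely occupy a large fraction of the $\binom{k+1}{2}$ degrees of freedom of the psd cone, as the unit-ball construction of Example~\ref{ex:ball} essentially shows, so $\sqrt{n}$ is the correct order of magnitude for this elementary argument.
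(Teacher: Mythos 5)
Your proof is correct and gives the right asymptotic bound, but it takes a genuinely different and in fact more elementary route than the paper's. You argue directly from Definition~\ref{def:psd rank}: a $\PSD^k$-lift lives in $\Sym^k$, a linear projection cannot raise dimension, so $n = \dim C \leq \dim(\PSD^k \cap L) \leq \dim \Sym^k = \binom{k+1}{2}$, and solving for $k$ gives $\Omega(\sqrt{n})$. The paper instead stays inside its factorization framework: it notes that a $\PSD^k$-factorization of the slack operator $S_C(x,y) = 1 - \langle x,y\rangle$ exists, observes from the explicit bilinear form $(1,x^\top)\cdot(1,-y^\top)^\top$ that $S_C$ has rank exactly $n+1$ as a bilinear operator (using $n+1$ affinely independent extreme points of $C$ and of $C^\circ$), and then invokes the same inequality $\frac12(\sqrt{1+8\,\rank}-1) \leq \psdrank$ from~\eqref{eq:bounds on psd rank} applied to $S_C$. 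Both proofs ultimately rest on the same fact --- $\dim \Sym^k = \binom{k+1}{2}$ --- but yours counts dimensions of the lift itself, while the paper counts the rank of the factored operator. The trade-offs: your argument is shorter, needs no slack-operator machinery, and works immediately from the definition; the paper's version is phrased so as to reuse the already-established matrix inequality~\eqref{eq:bounds on psd rank} and picks up a slightly sharper constant ($n+1$ rather than $n$ inside the radical) from the affine ``$1$'' in the slack form, though this makes no difference to the $\Omega(\sqrt{n})$ statement. Both are valid proofs of the proposition.
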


\begin{proof} 
Suppose $\psdrank(C) = k$. Then there exists maps $A \,:\, \textup{ext}(C) \rightarrow \PSD^k$ and $B \,:\, \textup{ext}(C^\circ) \rightarrow \PSD^k$ such that for all $(x,y) \in \textup{ext}(C) \times \textup{ext}(C^\circ)$, 
\begin{align} \label{eq:slack operator psd rank bound}
S_C((x,y)) =  1 - \langle x, y \rangle = (1, x^\top) \cdot \begin{pmatrix} 1 \\ -y \end{pmatrix} = \textup{trace}(A(x)B(y)).
\end{align}

Define $\rank(S_C)$ to be the minimum $l$ such that $S_C((x,y)) = a_x^\top b_y$ for $a_x,b_y \in \RR^l$. 
Equality of the first and third expressions in \eqref{eq:slack operator psd rank bound} implies that $\rank(S_C) \leq n+1$.
Now consider $n+1$ affinely independent extreme points $x_1, \ldots, x_{n+1}$ of $C$ and $n+1$ affinely independent extreme points 
$y_1, \ldots, y_{n+1}$ of $C^\circ$. Then the values of $S_C$ restricted to $(x,y)$ as $x$ and $y$ vary in these chosen sets are the entries of the matrix
$$ \begin{pmatrix} 1 &  x_1^\top \\ \vdots \\ 1 & x_{n+1}^\top \end{pmatrix}
\begin{pmatrix} 1  &  \cdots & 1  \\
-y_1 &  \cdots & -y_{n+1} \end{pmatrix}$$ 
which has rank $n+1$. Therefore, $\rank(S_C) = n+1$.
Equality of the first and last expressions in \eqref{eq:slack operator psd rank bound} implies that the first inequality in \eqref{eq:bounds on psd rank} holds with $M$ replaced by $S_C$ via the same proof, see \cite[Proposition 4]{GPT-lifts}. In other words,   
$  \frac{1}{2} \left( \sqrt{1+8 (n+1)} -1  \right) \leq \psdrank(S_C) = \psdrank(C),$ 
and we get the result.
\end{proof}

\begin{example} \label{ex:ball continued}
The spectrahedral lift of $B_{n^2}$ in Example~\ref{ex:ball} is optimal, and $\psdrank (B_{n^2}) = \Theta(n)$. 
\qed
\end{example}

The lower bounds in Theorem~\ref{thm:lower bound on psd rank of convex sets} and Proposition~\ref{prop:lower bound depending on n} depend solely on the algebraic degree of $C^\circ$
and $n$ respectively. A question of interest is how the bound might jointly depend on both these parameters?

While the lower bounds in Theorems~\ref{thm:lower bound on psd rank for polytopes}, \ref{thm:lower bound on psd rank of convex sets} and Proposition~\ref{prop:lower bound depending on n} can be tight, we do not have much understanding of the psd ranks of specific polytopes or convex sets except in a few cases. 
For example, Theorem~\ref{thm:lower bound on psd rank of convex sets} implies that the psd rank of polygons must grow to infinity as the number of vertices grows to infinity. However, we do not know if the psd rank of polygons is monotone in the number of vertices. 

\section{Psd-Minimal Polytopes}
\label{sec:psd minimality}

Recall from Theorem~\ref{thm:lower bound on psd rank for polytopes} that the psd rank of an $n$-dimensional polytope is at least $n+1$. In this section we study those polytopes whose psd rank is exactly this lower bound. Such polytopes are said to be {\em psd-minimal}. The key to understanding psd-minimality is another notion of rank of a nonnegative matrix.

\begin{definition} \label{def:Hadamard square root} 
A {\em Hadamard square root} of a nonnegative real matrix $M$, denoted as $\sqrt{M}$,  is any matrix 
whose $(i,j)$-entry is a square root (positive or negative) of the $(i,j)$-entry of $M$.  
\end{definition}

Let $\sqrtrank (M) := \textup{min} \{ \rank(\sqrt{M}) \}$ be the minimum rank of a Hadamard square root of a nonnegative matrix $M$. We recall the basic connection between the psd rank of a nonnegative matrix $M$ and $\sqrtrank (M)$ shown in \cite[Proposition 2.2]{GRTpsdminimal}.

\begin{proposition} \label{prop:psd rank and Hadamard square roots}
If $M$ is a nonnegative matrix, then $\psdrank (M) \leq \sqrtrank (M)$.
In particular, the psd rank of a $0/1$ matrix is at most the rank of the matrix.
\end{proposition}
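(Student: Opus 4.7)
The plan is to take an optimal rank decomposition of a Hadamard square root of $M$ and square it, entrywise, in a way that produces rank-one psd factors. Concretely, suppose $\sqrtrank(M) = k$, so that there is a Hadamard square root $N = \sqrt{M}$ with $\rank(N) = k$. Writing $N = UV$ where $U$ is $p \times k$ with rows $u_1^\top, \ldots, u_p^\top$ and $V$ is $k \times q$ with columns $v_1, \ldots, v_q$, I have $N_{ij} = u_i^\top v_j$, and therefore
\[
M_{ij} = N_{ij}^2 = (u_i^\top v_j)^2 = \textup{trace}\bigl((u_i u_i^\top)(v_j v_j^\top)\bigr) = \langle u_i u_i^\top, v_j v_j^\top \rangle.
\]

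The key observation is that each $u_i u_i^\top$ and each $v_j v_j^\top$ is a $k \times k$ rank-one symmetric positive semidefinite matrix. Setting $A_i := u_i u_i^\top \in \PSD^k$ and $B_j := v_j v_j^\top \in \PSD^k = (\PSD^k)^*$, the displayed identity is precisely a $\PSD^k$-factorization of $M$ in the sense of Definition~\ref{def:K-factorization of nonnegative matrices}. By the definition of psd rank of a matrix, this gives $\psdrank(M) \leq k = \sqrtrank(M)$.

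For the second assertion, notice that if $M$ is a $0/1$ matrix, then $M$ is itself a Hadamard square root of $M$, since the only entries that arise are $0$ and $1$, and each equals its own square. Consequently $\sqrtrank(M) \leq \rank(M)$, and combining this with the inequality just established yields $\psdrank(M) \leq \rank(M)$. I do not anticipate any real obstacle here; the only point requiring a moment's thought is the passage from an arbitrary rank factorization of $N$ (over $\RR$, with possibly negative entries) to the rank-one psd matrices $u_i u_i^\top$ and $v_j v_j^\top$, and this is immediate since squaring the bilinear form $u_i^\top v_j$ eliminates the sign ambiguity inherent in choosing the Hadamard square root.
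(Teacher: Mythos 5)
Your argument is correct and is essentially identical to the paper's: take a rank-$k$ factorization of a minimal-rank Hadamard square root, obtaining $(\sqrt{M})_{ij}=\langle a_i,b_j\rangle$ with $a_i,b_j\in\RR^k$, and observe that $M_{ij}=\langle a_i,b_j\rangle^2=\langle a_ia_i^\top,b_jb_j^\top\rangle$, a $\PSD^k$-factorization with rank-one factors. Your handling of the $0/1$ case (a $0/1$ matrix is its own Hadamard square root) is the standard, implicit justification of the paper's ``in particular'' clause.
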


\begin{proof} Let $\sqrt{M}$ be a Hadamard square root of $M \in \RR^{p \times q}_+$ of rank $r$. Then there exist vectors 
$a_1, \ldots, a_p, b_1, \ldots, b_q \in \RR^{r}$ such that $(\sqrt{M})_{ij} = \langle a_i, b_j \rangle$. Therefore,  
$M_{ij} = \langle a_i, b_j \rangle^2 = \langle a_i a_i^T, b_j b_j^T \rangle$ where the second inner product is the trace inner product for symmetric matrices defined earlier.  Hence, $\psdrank (M) \leq r$.
\end{proof}

Even though $\sqrtrank(M)$ is only an upper bound on $\psdrank(M)$, we cannot find $\PSD^k$-factorizations of $M$ with only rank one factors if $k < \sqrtrank(M)$.

\begin{lemma} \cite[Lemma 2.4]{GRTpsdminimal} \label{lem:rank one factors}
The smallest $k$ for which a nonnegative real matrix $M$ admits a $\PSD^k$-factorization in which all factors are matrices of rank one is $k = \sqrtrank(M)$.
\end{lemma}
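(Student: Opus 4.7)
The plan is to establish both inequalities separately. For the upper bound $k \leq \sqrtrank(M)$, I would simply observe that the construction in the proof of Proposition~\ref{prop:psd rank and Hadamard square roots} already yields a $\PSD^{\sqrtrank(M)}$-factorization of $M$ in which every factor is rank one. Indeed, if $\sqrt{M}$ is a Hadamard square root of minimum rank $r = \sqrtrank(M)$ with factorization $(\sqrt{M})_{ij} = \langle a_i, b_j \rangle$ for $a_i, b_j \in \RR^r$, then the factors $a_i a_i^\top$ and $b_j b_j^\top$ are rank one PSD matrices of size $r$ that factorize $M$.

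For the lower bound, suppose $M$ admits a $\PSD^k$-factorization $M_{ij} = \langle A_i, B_j \rangle$ in which every $A_i$ and every $B_j$ has rank one. Then I can write $A_i = a_i a_i^\top$ and $B_j = b_j b_j^\top$ for some vectors $a_i, b_j \in \RR^k$ (using that rank-one PSD matrices are outer products). Using the trace inner product, this gives
\[
M_{ij} = \textup{trace}(a_i a_i^\top b_j b_j^\top) = (a_i^\top b_j)^2.
\]
Hence the matrix $N$ whose $(i,j)$-entry equals $a_i^\top b_j$ is a Hadamard square root of $M$. Since $N$ factors as the product of a matrix with rows $a_i^\top$ and a matrix with columns $b_j$, both having inner dimension $k$, we get $\rank(N) \leq k$, so $\sqrtrank(M) \leq k$. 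Combining the two directions gives the claimed equality.

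There is no real obstacle in this argument; the only subtlety is the standard fact that a rank-one positive semidefinite matrix is necessarily an outer product $vv^\top$ (not merely $uv^\top$ for distinct $u,v$), which is what forces a genuine Hadamard square root to appear rather than a signed variant with separate left and right factors.
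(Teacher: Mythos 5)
Your argument is correct and follows essentially the same two-step approach as the paper: for the upper bound, reuse the rank-one construction from the proof of Proposition~\ref{prop:psd rank and Hadamard square roots} applied to a minimum-rank Hadamard square root; for the lower bound, write the rank-one factors as outer products and observe that the matrix of inner products $\langle a_i, b_j\rangle$ is a Hadamard square root of rank at most $k$. Your closing remark correctly identifies the one point the paper leaves implicit, namely that a rank-one psd matrix is necessarily a symmetric outer product $vv^\top$.
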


\begin{proof} If $k = \sqrtrank (M)$, then there is a Hadamard square root of $M \in \RR^{p \times q}_+$ of rank $k$ and the proof of Proposition~\ref{prop:psd rank and Hadamard square roots} gives a $\PSD^k$-factorization of $M$ in which all factors have rank one.
On the other hand, if there exist $a_1a_1^T, \ldots, a_pa_p^T, b_1b_1^T, \ldots, b_qb_q^T \in \PSD^k$ such that $M_{ij}= \langle a_ia_i^T, b_jb_j^T \rangle = \langle a_i, b_j \rangle^2$, then the matrix with $(i,j)$-entry $\langle a_i, b_j \rangle$ is a Hadamard square root of $M$ of rank at most $k$. 
\end{proof} 

This brings us to a characterization of psd-minimal polytopes.

\begin{theorem} \label{thm:psdrank n+1} 
If $P \subset \RR^n$ is a full-dimensional polytope, then  $\psdrank (P) = n+1$ if and only if 
$\sqrtrank (S_P) = n+1$.
\end{theorem}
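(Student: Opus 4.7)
The plan is to prove this by stringing together three results already established: Theorem~\ref{thm:lower bound on psd rank for polytopes}, Proposition~\ref{prop:psd rank and Hadamard square roots}, and Lemma~\ref{lem:rank one factors}. The equivalence is really an equality of two bounds that agree precisely at the $n+1$ threshold.

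For the forward direction, I would assume $\psdrank(P) = n+1$ and show both $\sqrtrank(S_P) \leq n+1$ and $\sqrtrank(S_P) \geq n+1$. Since $\psdrank(S_P) = n+1$, there exists a $\PSD^{n+1}$-factorization of $S_P$. The second clause of Theorem~\ref{thm:lower bound on psd rank for polytopes} says that every such factorization consists entirely of rank one factors. Lemma~\ref{lem:rank one factors} then gives $\sqrtrank(S_P) \leq n+1$. For the reverse inequality, Proposition~\ref{prop:psd rank and Hadamard square roots} always gives $\psdrank(S_P) \leq \sqrtrank(S_P)$, so $\sqrtrank(S_P) \geq n+1$. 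Combining yields $\sqrtrank(S_P) = n+1$.

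For the backward direction, assume $\sqrtrank(S_P) = n+1$. Proposition~\ref{prop:psd rank and Hadamard square roots} immediately gives $\psdrank(P) = \psdrank(S_P) \leq n+1$, while Theorem~\ref{thm:lower bound on psd rank for polytopes} forces $\psdrank(P) \geq n+1$. Hence $\psdrank(P) = n+1$.

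There is no real obstacle here since both directions are bookkeeping among previously established inequalities. The only point requiring a small amount of care is making sure the lower bound portion of Theorem~\ref{thm:lower bound on psd rank for polytopes} is being used in the right direction in each case: in ($\Rightarrow$) its second (structural) conclusion about rank one factors is essential to activate Lemma~\ref{lem:rank one factors}, whereas in ($\Leftarrow$) only its first (numerical) conclusion $\psdrank(P) \geq n+1$ is used. This asymmetric use is what makes the minimal psd rank coincide precisely with a purely linear algebraic invariant of $S_P$, namely the minimum rank of a Hadamard square root.
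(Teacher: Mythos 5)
Your proof is correct and follows essentially the same route as the paper: the backward direction combines Proposition~\ref{prop:psd rank and Hadamard square roots} with the numerical lower bound from Theorem~\ref{thm:lower bound on psd rank for polytopes}, and the forward direction uses the structural (rank-one factors) part of Theorem~\ref{thm:lower bound on psd rank for polytopes} together with Lemma~\ref{lem:rank one factors}. Your remark about the asymmetric use of the two clauses of Theorem~\ref{thm:lower bound on psd rank for polytopes} is a nice observation, but the substance matches the paper's argument.
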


\begin{proof}
By Proposition~\ref{prop:psd rank and Hadamard square roots}, $\psdrank (P) \leq \sqrtrank (S_P)$. Therefore, if $\sqrtrank 
(S_P)  = n+1$, then by Theorem~\ref{thm:lower bound on psd rank for polytopes}, the psd rank of $P$ is exactly $n+1$.

Conversely, suppose $\psdrank (P) = n+1$. Then there exists a $\PSD^{n+1}$-factorization of $S_P$ which, by Theorem~\ref{thm:lower bound on psd rank for polytopes}, has all factors of rank one. Thus, by Lemma~\ref{lem:rank one factors}, we have $\sqrtrank (S_P) \leq n+1$.  Since $\sqrtrank$ is bounded below by $\psdrank$, we must have $\sqrtrank (S_P) = n+1$.
\end{proof}

Our next goal is to find psd-minimal polytopes. Recall that two polytopes $P$ and $Q$ are 
{\em combinatorially equivalent} if they have the same vertex-facet incidence structure. 
In this section we describe a simple algebraic obstruction to psd-minimality based on the combinatorics of a given polytope, therefore providing an obstruction for all polytopes in the given combinatorial class. 
Our main tool is a symbolic version of the slack matrix of a polytope.

\begin{definition} The {\em symbolic slack matrix} of a $d$-polytope $P$ is the matrix, $S_P(x)$, obtained by replacing all positive entries in the slack matrix $S_P$ of $P$ with distinct variables $x_1, \ldots, x_t$.
\end{definition}

Note that two $d$-polytopes $P$ and $Q$ are in the same combinatorial class if and only if
$S_P(x) = S_Q(x)$ up to permutations of rows and columns, and names of variables. Call a polynomial $f \in \RR[x_1,\ldots,x_t]$ a {\em monomial} if it is of the form $f = \pm x^a$ where $x^a = x_1^{a_1} \cdots x_t^{a_t}$ and $a=(a_1,\ldots,a_t) \in \NN^t$.  We refer to a  sum of two distinct monomials as a {\em binomial} and to the sum of three distinct monomials as a {\em trinomial}. 
 This differs from the usual terminology that allows nontrivial coefficients.
 
\begin{lemma}[Trinomial Obstruction Lemma] \label{lem:trinomial obstruction}
Suppose the symbolic slack matrix $S_P(x)$ of an $n$-polytope $P$ has a $(n+2)$-minor 
that is a trinomial. Then no polytope in the combinatorial class of $P$ can be psd-minimal.
\end{lemma}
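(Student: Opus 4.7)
The plan is to argue by contradiction. Suppose some polytope $Q$ in the combinatorial class of $P$ is psd-minimal; by Theorem~\ref{thm:psdrank n+1} this is equivalent to $\sqrtrank(S_Q) = n+1$, so we may fix a Hadamard square root $\sqrt{S_Q}$ of rank exactly $n+1$. The entire contradiction will come from the single $(n+2)\times(n+2)$ submatrix of $S_Q$ whose symbolic determinant is the promised trinomial.

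My first step is to unpack the combinatorial content of the trinomial hypothesis. Since $S_P(x)$ uses a distinct variable at each positive position, distinct permutations of an $(n+2)\times(n+2)$ submatrix always visit different sets of positions and therefore contribute distinct monomials in the Laplace expansion; no cancellation between permutations is possible. Thus ``trinomial'' means that exactly three permutations $\sigma_1,\sigma_2,\sigma_3$ of $\{1,\ldots,n+2\}$ avoid the zero positions of that submatrix. They carry signs $c_j := \textup{sgn}(\sigma_j)\in\{\pm 1\}$ and, evaluated at the entries of $S_Q$, produce positive products $m_1,m_2,m_3 > 0$. Because $S_Q$ and $\sqrt{S_Q}$ share the support of $S_P$, the very same three permutations index the nonzero terms in the corresponding minor of $\sqrt{S_Q}$.

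Next I would record the two determinantal vanishing conditions. The standard fact $\rank(S_Q)=n+1$ forces the $(n+2)$-minor of $S_Q$ to vanish, giving $c_1m_1+c_2m_2+c_3m_3=0$. The assumption $\rank(\sqrt{S_Q})=n+1$ forces the corresponding minor of $\sqrt{S_Q}$ to vanish, giving $s_1\sqrt{m_1}+s_2\sqrt{m_2}+s_3\sqrt{m_3}=0$, where each $s_j\in\{\pm 1\}$ is $c_j$ multiplied by the product of the chosen Hadamard signs along the positions traced by $\sigma_j$. Introducing $u_j := s_j\sqrt{m_j}$, which is nonzero because $m_j>0$, these two identities become
\begin{equation*}
u_1+u_2+u_3=0 \qquad\text{and}\qquad c_1u_1^2+c_2u_2^2+c_3u_3^2=0.
\end{equation*}

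The last step is a short sign-pattern check. Positivity of the $m_j$ rules out $c_1=c_2=c_3$, so exactly one of the $c_j$ is opposite in sign to the other two. Eliminating $u_3=-u_1-u_2$ from the quadratic yields
\begin{equation*}
(c_1+c_3)\,u_1^2+2c_3\,u_1u_2+(c_2+c_3)\,u_2^2=0,
\end{equation*}
and in each admissible sign pattern one of the coefficients $c_1+c_3$ or $c_2+c_3$ vanishes, so the left-hand side factors as $u_j\cdot(\text{linear form})$ with $j\in\{1,2\}$, forcing one of $u_1$, $u_2$, $u_1+u_2$ (equivalently, one of $u_1,u_2,u_3$) to be zero, a contradiction. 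The main obstacle I anticipate is keeping the signs consistent between the two determinantal expansions; the substitution $u_j := s_j\sqrt{m_j}$ is what makes the argument painless, because it removes the square roots and reduces the problem to an elementary real algebraic identity with no nonzero solutions.
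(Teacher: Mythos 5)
Your proof is correct and follows essentially the same path as the paper's: both reduce psd-minimality of $Q$ to the existence of a rank-$(n+1)$ Hadamard square root of $S_Q$, observe that the trinomial $(n+2)$-minor must then vanish simultaneously on $S_Q$ and on its square root, and derive a contradiction from the resulting pair of identities $c_1u_1^2+c_2u_2^2+c_3u_3^2=0$ and $u_1+u_2+u_3=0$. You simply spell out the sign analysis and the elimination that the paper compresses into ``in either case, it is not possible.''
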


\begin{proof}  Suppose $Q$ is psd-minimal and combinatorially equivalent to $P$.
Hence, we can assume that $S_P(x)$ equals $S_Q(x)$. By Theorem~\ref{thm:psdrank n+1} there is some $u=(u_1,\ldots, u_t)\in\RR^t$, with no coordinate equal to zero, such that $S_Q=S_P(u^2_1,\ldots,u^2_t)$ and $\rank(S_P(u)) =n+1$. Since $S_Q$ is the slack matrix of an $n$-polytope, we have  
$$
\rank(S_P(u^2_1,\ldots,u^2_t))=n+1 = \rank(S_P(u_1,\ldots, u_t)).
$$

Now suppose $D(x)$ is a trinomial $(n+2)$-minor of $S_P(x)$. Up to sign, $D(x)$ has the form $x^a + x^b + x^c$ or $x^a - x^b + x^c$ for some $a, b, c\in \NN^t$. In either case, 
it is not possible for $D(u^2_1,\ldots,u^2_t)=D(u_1,\ldots,u_t)=0$.
\end{proof}

\subsection{Psd-minimal polytopes of dimension up to four}

\begin{proposition} \cite[Theorem~4.7]{GRTpsdminimal} \label{prop:psd minimal in the plane}
The psd-minimal polygons are precisely all triangles and quadrilaterals.
\end{proposition}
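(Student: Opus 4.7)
My plan is to apply Theorem~\ref{thm:psdrank n+1} so that psd-minimality of a polygon $P$ amounts to the assertion $\sqrtrank(S_P) = 3$. I will handle triangles and quadrilaterals by exhibiting Hadamard square roots of rank three, and rule out all polygons with at least five vertices uniformly via the Trinomial Obstruction Lemma~\ref{lem:trinomial obstruction}.

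Each vertex of a triangle lies on exactly two of its three facets, so after permuting rows and columns the $3 \times 3$ slack matrix is diagonal with positive diagonal; any Hadamard square root remains diagonal with nonzero diagonal and therefore has rank $3$. For a quadrilateral, labelling vertices and facets cyclically produces the symbolic slack matrix
\[
S_P(x) = \begin{pmatrix} 0 & a & b & 0 \\ 0 & 0 & c & d \\ e & 0 & 0 & f \\ g & h & 0 & 0 \end{pmatrix},
\]
whose determinant is the \emph{binomial} $bdeh - acfg$. Because every $2$-dimensional slack matrix has rank $3$, the identity $bdeh = acfg$ holds at every real quadrilateral. Choosing all signs equal to $+1$ in the Hadamard square root $\sqrt{S_P}$ then annihilates its full $4 \times 4$ determinant, while the $3 \times 3$ submatrix on rows $1,2,3$ and columns $2,3,4$ is upper triangular with nonzero diagonal and so certifies rank exactly $3$. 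Thus $\sqrtrank(S_P) = 3$ and Theorem~\ref{thm:psdrank n+1} yields psd-minimality in both cases.

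For a polygon with $n \geq 5$ vertices, I invoke Lemma~\ref{lem:trinomial obstruction}. Indexing vertices and facets cyclically, the $(i,j)$ entry of the symbolic slack matrix vanishes iff $j \equiv i-1$ or $i \pmod n$. Since $n \geq 5$, the zero in row $1$ at column $n$ falls outside the submatrix on rows and columns $1,2,3,4$, so this submatrix has the staircase pattern
\[
\begin{pmatrix} 0 & x_1 & x_2 & x_3 \\ 0 & 0 & x_4 & x_5 \\ x_6 & 0 & 0 & x_7 \\ x_8 & x_9 & 0 & 0 \end{pmatrix}.
\]
Cofactor expansion along column $1$ and then along sparse rows of the two resulting $3 \times 3$ blocks produces the trinomial
\[
x_2 x_5 x_6 x_9 \;-\; x_3 x_4 x_6 x_9 \;-\; x_1 x_4 x_7 x_8,
\]
whose three monomials have pairwise distinct supports. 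Lemma~\ref{lem:trinomial obstruction} then forbids psd-minimality for any polygon combinatorially equivalent to an $n$-gon with $n \geq 5$.

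The central subtlety is the dichotomy between the quadrilateral and the larger cases: in dimension four the symbolic determinant is a binomial that can vanish under any specialisation respecting the rank constraint, while for $n \geq 5$ a third monomial appears and the trinomial argument blocks the analogue of the positive-sign trick from producing a rank-$3$ Hadamard square root. Once the correct submatrix and sign convention have been identified, the remaining verifications — the identity $bdeh = acfg$, the nonvanishing of an upper-triangular $3\times 3$ minor for quadrilaterals, and the distinctness of the three monomial supports for $n \geq 5$ — are routine.
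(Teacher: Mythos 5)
Your argument for $n$-gons with $n \geq 5$ is essentially the same as the paper's: you extract exactly the same $4 \times 4$ submatrix of the symbolic slack matrix, compute the same determinant (up to sign), and invoke the Trinomial Obstruction Lemma~\ref{lem:trinomial obstruction}. The interesting divergence is in your treatment of the sufficiency direction. The paper disposes of triangles and quadrilaterals in one line by appealing to the projective invariance of psd rank — all triangles are projectively equivalent, so it suffices to verify one, and similarly for quadrilaterals, with the verification left implicit. You instead argue directly that \emph{every} triangle and quadrilateral has $\sqrtrank(S_P) = 3$, which via Theorem~\ref{thm:psdrank n+1} gives psd-minimality without any invariance input. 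The triangle case is immediate (diagonal slack matrix). The quadrilateral case is the nontrivial point: you use $\rank(S_P) = 3$ to deduce the scalar identity $bdeh = acfg$, observe that the determinant of the slack matrix is a binomial so the all-positive Hadamard square root still has vanishing $4\times4$ determinant (since $\sqrt{bdeh} = \sqrt{acfg}$ termwise when everything is positive), and then exhibit an upper-triangular $3\times3$ minor to pin the rank. This is a clean, self-contained alternative that makes explicit why quadrilaterals succeed and pentagons fail: the determinant degenerates from a trinomial to a binomial, and only for a binomial can a uniform sign choice force cancellation compatible with the rank constraint. Your version buys independence from the projective-invariance machinery at the cost of a slightly longer verification; the paper's is shorter but leans on a result it cites rather than proves. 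Both are correct.
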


\begin{proof}
Let $P$ be an $n$-gon where $n > 4$. Then $S_P(x)$ has a submatrix of the form 
$$ \begin{bmatrix} 
0 & x_1 & x_2 & x_3 \\ 0 & 0 & x_4 & x_5 \\ x_6 & 0 & 0 & x_7 \\ x_8 & x_9 & 0 & 0 
\end{bmatrix},$$
whose determinant is $x_1x_4x_7x_8 -  x_2x_5x_6x_9 + x_3x_4x_6x_9 $ up to sign. 
By Lemma~\ref{lem:trinomial obstruction}, no $n$-gon with $n > 4$ can be psd-minimal.

Since all triangles are projectively equivalent, by verifying the psd-minimality of one, they are 
all seen to be psd-minimal. Similarly, for quadrilaterals.
\end{proof}

Lemma~\ref{lem:trinomial obstruction} can also be used to 
classify up to combinatorial equivalence all $3$-polytopes that are psd-minimal. 
Using Proposition~\ref{prop:psd minimal in the plane}, together with the fact that faces of psd-minimal polytopes are also psd-minimal, and the invariance of psd rank under polarity, we get that that any $3$-polytope $P$ with a vertex of degree larger than four, or a facet that is an $n$-gon where $n > 4$, 
cannot be  psd-minimal.  

\begin{lemma} \label{lem:no square facet incident to a degree 4 vertex}
If $P$ is a $3$-polytope with a vertex of degree four and a quadrilateral 
facet incident to this vertex, then $S_P(x)$ contains a trinomial $5$-minor.
\end{lemma}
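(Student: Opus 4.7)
The strategy is to exhibit an explicit $5\times 5$ submatrix of $S_P(x)$ whose determinant is a sum of exactly three monomials. I would begin by fixing notation at the degree-four vertex $v$: label the four incident facets $F_1,F_2,F_3,F_4$ in cyclic order around $v$, with $F_1$ the given quadrilateral, and let $a_i$ be the other endpoint of the edge $F_i\cap F_{i+1}$ (indices cyclic, so $a_4$ lies on $F_4\cap F_1$). Since $F_1$ is a quadrilateral whose only neighbours of $v$ along $F_1$ are $a_1$ and $a_4$, the remaining fourth vertex $w$ of $F_1$ is not adjacent to $v$.

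The next step is an incidence analysis. In a convex $3$-polytope the intersection of two distinct facets is a single face of $P$, so two facets at $v$ share an edge through $v$ precisely when they are consecutive in the cyclic order. This forces $F_1\cap F_3=F_2\cap F_4=\{v\}$ and $F_i\cap F_{i+1}=va_i$, and a direct check then yields $a_1\notin F_3\cup F_4$, $a_3\notin F_1\cup F_2$, $a_4\notin F_2\cup F_3$, and $w\notin F_2\cup F_3\cup F_4$: in every case the alternative would place the vertex on some edge $va_k$ through $v$, forcing it to equal $v$ or one of the $a_j$, which is false. Finally, since $v$ lies on exactly four facets and a $3$-polytope with a degree-four vertex must have more than four facets (the tetrahedron, the only $3$-polytope with four facets, has every vertex of degree three), there is a facet $F'$ with $v\notin F'$.

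With this data I would form the $5\times 5$ submatrix $M$ of $S_P(x)$ with rows indexed by $v,a_1,a_3,a_4,w$ and columns indexed by $F_1,F_2,F_3,F_4,F'$. The incidence analysis pins the first four columns and the first row of $M$ to
\[
M \;=\; \begin{pmatrix}
0 & 0 & 0 & 0 & z\\
0 & 0 & x_1 & x_2 & *\\
x_3 & x_4 & 0 & 0 & *\\
0 & x_5 & x_6 & 0 & *\\
0 & y_1 & y_2 & y_3 & *
\end{pmatrix},
\]
with $z,x_1,\ldots,x_6,y_1,y_2,y_3$ being ten pairwise distinct variables of $S_P(x)$ and the entries marked $*$ irrelevant. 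Expanding $\det M$ along the first row leaves $z$ times the $4\times 4$ minor obtained by deleting row $1$ and column $5$; expanding that along its first column (nonzero only in the second row) and then the ensuing $3\times 3$ minor along its first row produces exactly three surviving terms. Multiplying through by $z$ preserves this count, so $\det M$ is a sum of three distinct signed monomials, i.e., a trinomial. Lemma~\ref{lem:trinomial obstruction} then completes the proof.

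I expect the main obstacle to be the incidence verification in the second paragraph: every one of the ten variables appearing in $M$ must be a genuine distinct nonzero symbolic entry, and this is exactly where both hypotheses are used --- the degree-four condition controls which facets at $v$ are edge-adjacent at $v$, while the quadrilateral condition supplies the vertex $w$ with its needed pattern of non-incidences. Once the submatrix is in the claimed form, the determinant computation is mechanical.
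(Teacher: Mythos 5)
Your proof is correct and takes essentially the same approach as the paper: form a $5\times 5$ submatrix of $S_P(x)$ whose rows are the degree-four vertex, three of its four neighbours, and the far corner of the quadrilateral, and whose columns are the four facets through $v$ plus one facet missing $v$, then check that its determinant is a trinomial. Your particular choice of three neighbours (both neighbours of $v$ lying on the quadrilateral plus one more) differs from the paper's choice of triple, so the zero pattern of your submatrix is not a row/column permutation of the paper's, but the determinant expansion produces a trinomial just the same.
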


\begin{proof} Let $v$ be the vertex of degree four incident to facets $F_1,F_2,F_3,F_4$ such that $[v_1,v]=F_1\cap F_2$, $[v_2,v]=F_2\cap F_3$, $[v_3, v]=F_3\cap F_4$ and $F_4\cap F_1$ are edges of $P$, where $v_1$, $v_2$ and $v_3$ are vertices of $P$. 

Suppose $F_4$ is quadrilateral. Then $F_4$ has a vertex $v_4$ that is different from, and non-adjacent to, $v$. Therefore, $v_4$ does not lie on  $F_1$, $F_2$ or  $F_3$.  Consider the $5 \times 5$ submatrix of $S_P(x)$ with rows indexed by $v,v_1,v_2,v_3,v_4$ and columns by 
$F_1,F_2,F_3,F_4,F$ where $F$ is a facet not containing $v$. This matrix has the form 
$$
\begin{bmatrix} 0 & 0 & 0 & 0 & x_1\\ 0 & 0 & x_2 & x_3 & *\\ x_4 & 0 & 0 & x_5 & * \\ x_6 & x_7 & 0 & 0 & * \\ x_8 & x_9 & x_{10} & 0 & * \\ 
\end{bmatrix},
$$
and its determinant is a trinomial. 
\end{proof}

\begin{proposition} \label{prop:combinatorial classification in 3-space}
The psd-minimal $3$-polytopes are combinatorially equivalent to simplices, quadrilateral pyramids, bisimplices, octahedra or 
their duals.
\end{proposition}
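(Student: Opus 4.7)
The plan is to combine the combinatorial restrictions already assembled in this section with Euler's formula in order to enumerate psd-minimal $3$-polytopes up to combinatorial equivalence, and then to identify the surviving types with the six polytopes listed. First I would extract three conditions on a psd-minimal $3$-polytope $P$: (a) every $2$-face of $P$ is a triangle or a quadrilateral, obtained by applying Proposition~\ref{prop:psd minimal in the plane} to each $2$-face together with the fact (noted in the paragraph preceding Lemma~\ref{lem:no square facet incident to a degree 4 vertex}) that faces of psd-minimal polytopes are psd-minimal; (b) every vertex of $P$ has degree $3$ or $4$, obtained by applying (a) to $P^{\circ}$ and using the polarity invariance of $\psdrank$, since facets of $P^{\circ}$ correspond to vertex figures of $P$; (c) no quadrilateral facet of $P$ is incident to a degree-$4$ vertex, which is exactly Lemma~\ref{lem:no square facet incident to a degree 4 vertex} and is self-dual.

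Next I would enumerate $3$-polytopes satisfying (a)--(c) via Euler's formula. With $v_i$ denoting the number of degree-$i$ vertices and $f_i$ the number of $i$-gonal facets, the handshake relations $2E = 3v_3 + 4v_4 = 3f_3 + 4f_4$ together with $V - E + F = 2$ give, in the case $v_4 = 0$, that $f_3 = 8 - V$ and $f_4 = 3V/2 - 6$, so $V \in \{4,6,8\}$, yielding the simplex, triangular prism, and cube, respectively. Polarity handles the case $f_4 = 0$, producing the simplex, bisimplex, and octahedron. The remaining case, $v_4 \geq 1$ and $f_4 \geq 1$, is the crux: at a degree-$4$ vertex $v$, condition (c) forces all four incident faces to be triangles so the link of $v$ is a $4$-cycle, and every vertex of a quadrilateral facet $F$ must have degree $3$, so $v \notin F$. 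A local analysis of the star of $v$ should then show that the only configuration consistent with the existence of a separate quadrilateral facet is the one where $v$ is the unique degree-$4$ vertex and $F$ is the base of the star, giving the quadrilateral pyramid.

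Finally, having narrowed the list to the six combinatorial types in the statement (simplex, quadrilateral pyramid, bisimplex, triangular prism, octahedron, cube, which are precisely the simplex, quadrilateral pyramid, bisimplex, octahedron, and their duals), I would verify psd-minimality of each by exhibiting a rank-$4$ Hadamard square root of a representative slack matrix and invoking Theorem~\ref{thm:psdrank n+1}; since $\sqrtrank$ is invariant under positive column scaling and under projective transformation of $P$, a single computation per combinatorial class suffices.

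The main obstacle I expect is the mixed case $v_4, f_4 \geq 1$ in the enumeration step, where Euler's formula together with (a)--(c) alone leaves several parameter tuples $(v_3, v_4, f_3, f_4)$ to be eliminated. The cleanest way to rule out configurations containing two or more degree-$4$ vertices alongside a quadrilateral facet is likely to produce further trinomial $5$-minors of $S_P(x)$, patterned after Lemma~\ref{lem:no square facet incident to a degree 4 vertex}, by choosing rows spanning the neighborhoods of a pair of degree-$4$ vertices together with a vertex of a quadrilateral facet, and columns among the surrounding facets whose zero pattern is forced by condition (c). The resulting finite list of remaining candidates is then finished off by direct symbolic determinant computations.
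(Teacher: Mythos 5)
Your plan reaches the right list of combinatorial types, but the route is genuinely different from the paper's, and a couple of the steps are shakier than your sketch suggests.

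The paper never touches Euler's formula. After observing (as you do) that a psd-minimal $3$-polytope has only degree-$3$/$4$ vertices and triangular/quadrilateral facets, it takes a degree-$4$ vertex $u$ (by polarity one may assume one exists unless $P$ is a simplex), notes via Lemma~\ref{lem:no square facet incident to a degree 4 vertex} that all four facets around $u$ are triangles so the link of $u$ is a $4$-cycle $v_1v_2v_3v_4$, and then splits only on $|V|=5$ versus $|V|>5$. If $|V|=5$ the disk on the other side of that $4$-cycle is either a single quadrilateral (quadrilateral pyramid) or one diagonal $[v_1,v_3]$ or $[v_2,v_4]$ is added (bisimplex). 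If $|V|>5$ some $v_i$ has an extra neighbor, Lemma~\ref{lem:no square facet incident to a degree 4 vertex} then forces that $v_i$ to be a degree-$4$ vertex with a triangular star, and propagating this around the $4$-cycle forces the octahedron. No face-vector enumeration is needed, and no further minor computations beyond Lemma~\ref{lem:no square facet incident to a degree 4 vertex}.

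Two points in your sketch need repair. First, in the $v_4=0$ case you pass from $(V,f_3,f_4)\in\{(4,4,0),(6,2,3),(8,0,6)\}$ directly to ``simplex, prism, cube.'' A face vector does not determine a $3$-polytope in general, so you still owe a (short) uniqueness argument for the last two; the paper sidesteps this entirely. Second, and more importantly, your mixed case $v_4\geq1,\ f_4\geq1$ is left at ``a local analysis of the star of $v$ should show\ldots,'' with a proposed fix of hunting for further trinomial $5$-minors. That hunt is unnecessary: once you have (a)--(c), Euler already yields $f_3=8-v_3$ and $f_4=\tfrac{3}{2}v_3+v_4-6$, and a simple count using (c) --- each quadrilateral facet has four degree-$3$ vertices, and each degree-$4$ vertex must be surrounded by four triangles --- kills every mixed tuple except $(v_3,v_4,f_3,f_4)=(4,1,4,1)$. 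If you are going to use Euler's formula, finish it with counting rather than inventing new minors; otherwise just use the paper's local argument, which is shorter.

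Finally, your closing verification step is both unnecessary for this proposition and subtly wrong. The proposition only asserts a necessary condition on the combinatorial type; sufficiency (which classes actually contain psd-minimal representatives, and which members) is Theorem~\ref{thm:classification in 3-space}. And the claim that ``a single computation per combinatorial class suffices'' because $\sqrtrank$ is projectively invariant fails for octahedra: that class is not projectively unique, and indeed only \emph{biplanar} octahedra are psd-minimal. So one representative per class cannot certify the whole class, which is precisely the content the paper defers to Theorem~\ref{thm:classification in 3-space}.
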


\begin{proof}
Suppose $P$ is a psd-minimal $3$-polytope. If $P$ contains only vertices of degree three and triangular facets, then $P$ is a simplex.

For all remaining cases, $P$ must have a vertex of degree four or a quadrilateral facet.  Since psd rank is preserved under polarity, we may assume that $P$ has a vertex $u$ of degree four.  By Lemma~\ref{lem:no square facet incident to a degree 4 vertex}, the neighborhood of $u$ looks as follows.
\begin{center}
\begin{tikzpicture}
  [scale=.2,auto=center]
  \coordinate (p) at (6,6);
  \coordinate (p1) at (1,11);
  \coordinate (p2) at (1,1);
  \coordinate (p3) at (11,1);
  \coordinate (p4) at (11,11);
  \node[label=$u$] at (p) {};
  \node[label=$v_1$] at ($(p1)-(1.2,1.7)$) {};
  \node[label=$v_2$] at ($(p2)-(1.2,1.3)$) {};
  \node[label=$v_3$] at ($(p3)+(1.2,-1.3)$) {};
  \node[label=$v_4$] at ($(p4)+(1.2,-1.7)$)  {};
  \foreach \from/\to in {p/p1,p/p2,p/p3,p/p4,p3/p4,p4/p1,p1/p2,p2/p3}
    \draw (\from) -- (\to);
\end{tikzpicture}
\end{center}

Suppose $P$ has five vertices. If all edges of $P$ are in the picture, i.e. the picture is a Schlegel diagram of $P$, then $P$ is a quadrilateral pyramid. Otherwise $P$ has one more edge, and this edge is $[v_1,v_3]$ or $[v_2, v_4]$, yielding a bisimplex in either case. 

If $P$ has more than five vertices, then we may assume that $P$ has a vertex $v$ that is a 
neighbor of $v_1$ different from $u$, $v_2$, $v_4$.  Then $v_1$ is a degree four vertex and thus, by Lemma~\ref{lem:no square facet incident to a degree 4 vertex}, all facets of $P$ containing $v_1$ are triangles.  
This implies that $v$ is a neighbor of $v_2$ and $v_4$. Applying the same logic to either $v_2$ or $v_4$, we get that $v$ is also a neighbor of $v_3$. Since all these vertices now have degree four, there could be no further vertices in $P$, and so $P$ is an octahedron.
Hence $P$ is combinatorially 
equal to, or dual to, one of the polytopes seen so far.
\end{proof}

Call an octahedron in $\RR^3$, {\em biplanar}, if there are two distinct planes each containing four vertices of the octahedron. The  complete classification of psd-minimal $3$-polytopes  is as follows.

\begin{theorem} \cite[Theorem~4.11]{GRTpsdminimal} \label{thm:classification in 3-space}
The psd-minimal $3$-polytopes are precisely simplices,  quadrilateral pyramids, bisimplices, biplanar octahedra and their polars.
\end{theorem}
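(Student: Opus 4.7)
By Proposition~\ref{prop:combinatorial classification in 3-space}, every psd-minimal $3$-polytope is combinatorially equivalent to a simplex, quadrilateral pyramid, bisimplex, octahedron, or a polar of one of these (i.e.\ a triangular prism or a cube). Combined with invariance of psd rank under polar duality noted at the end of Section~\ref{sec:factorization for convex sets}, it is enough to decide, within each of the four primal classes, which specific instances are psd-minimal. By Theorem~\ref{thm:psdrank n+1} this amounts to identifying those $P$ for which the slack matrix $S_P$ admits a Hadamard square root of rank $4$.

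The plan is to show that \emph{every} instance in the first three classes is psd-minimal, while in the octahedral class exactly the biplanar octahedra qualify. For the simplex this is immediate: all $3$-simplices are affinely equivalent, psd rank is an affine invariant, and the $4 \times 4$ slack matrix of the standard simplex has a full-rank Hadamard square root for every sign choice. For the quadrilateral pyramid and the bisimplex, I would examine the symbolic slack matrix $S_P(x)$ and verify that each of its maximal ($5 \times 5$) minors is either a monomial or a \emph{binomial} $\pm x^a \pm x^b$. A binomial minor can be forced to vanish by a compatible sign choice among the $\sqrt{x_i}$, and one checks that a single global sign assignment simultaneously vanishes every such minor, producing a Hadamard square root of $S_P$ of rank at most $4$ for every polytope in the class.

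The main obstacle is the octahedral case. For a generic octahedron, the $5$-minors of the $6 \times 8$ symbolic slack matrix are genuine \emph{trinomials}, so Lemma~\ref{lem:trinomial obstruction} would rule out psd-minimality. The biplanar case is rescued by the fact that if four vertices of $P$ lie on a common plane, the corresponding rows of $S_P$ satisfy a linear dependence that collapses a family of trinomial minors to binomials. I would single out one such trinomial $5$-minor, express it as a polynomial in the vertex coordinates, and verify that its vanishing on the locus $\{x_i^2 = (S_P)_i\}$ is equivalent to the coplanarity of a distinguished quadruple of vertices; the octahedral symmetry of the face lattice then forces a second independent coplanarity, which is biplanarity. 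Conversely, for a biplanar octahedron one constructs an explicit rank-$4$ Hadamard square root of $S_P$ by using both coplanarities to match signs across all binomial minor relations. This step --- matching algebraic vanishing of the collapsed minors with the geometric condition of biplanarity in both directions --- is the technical heart of the argument.

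Finally, the polar cases are handled automatically by invariance of psd rank under polarity: the polar of a simplex is a simplex, of a quadrilateral pyramid another quadrilateral pyramid, of a bisimplex a triangular prism, and of a biplanar octahedron a cube. Combining the four primal conclusions with their polar duals yields the complete list asserted in the theorem.
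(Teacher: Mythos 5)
Your overall structure is sound: reduce to the combinatorial classes identified by Proposition~\ref{prop:combinatorial classification in 3-space}, use polarity to handle the duals, and decide psd-minimality of each candidate via Theorem~\ref{thm:psdrank n+1} by examining Hadamard square roots and minors of the slack matrix. The handling of the simplex, and the plan for the quadrilateral pyramid and bisimplex (show every $5$-minor of $S_P(x)$ is a monomial or binomial, then produce a globally consistent sign assignment), are reasonable, though you should note that consistency of signs across \emph{all} binomial $5$-minors is not automatic and must actually be verified once you list the minors --- fortunately there are few of them (one for the $5\times5$ quadrilateral-pyramid slack matrix, six for the $5\times6$ bisimplex slack matrix).

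However, the octahedral case as described contains a genuine error, and it is precisely the part you flag as ``the technical heart.'' You assert that for a generic octahedron the $5$-minors of the $6\times 8$ symbolic slack matrix are trinomials, and that Lemma~\ref{lem:trinomial obstruction} then rules out psd-minimality. But the symbolic slack matrix $S_P(x)$ depends only on the \emph{combinatorial class} of $P$, not on which particular octahedron you take. If $S_P(x)$ had even one trinomial $5$-minor, Lemma~\ref{lem:trinomial obstruction} would immediately rule out psd-minimality of \emph{every} octahedron, including the biplanar ones --- contradicting the theorem you are trying to prove. In fact, a direct expansion of a $5\times5$ submatrix of the octahedral symbolic slack matrix yields a four-term expression, not a trinomial, and that extra term is exactly what leaves room for biplanar octahedra to survive. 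So the mechanism you propose (``biplanarity collapses trinomials to binomials'') cannot be the right one. The actual argument must work with the longer minors directly, analyzing when the rank-$(n+1)$ constraint $D(u)=D(u^2)=0$ admits a nonzero solution $u$, and showing that for the four-term determinants of the octahedron this forces (and is forced by) the two coplanarity conditions. That analysis is genuinely delicate and is not delivered by your proposal.

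Finally, a presentational point: the paper itself does not prove Theorem~\ref{thm:classification in 3-space}; it states it and cites \cite{GRTpsdminimal}. What the paper does prove, in Proposition~\ref{prop:combinatorial classification in 3-space}, is the coarser combinatorial classification, using the trinomial obstruction to eliminate impossible combinatorial types. Your proposal correctly takes that as the starting point, but the work of singling out \emph{which} octahedra within the surviving class are psd-minimal is not a trinomial-obstruction argument and needs a fundamentally different (quadrinomial) analysis.
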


In dimension four, the classification of psd-minimal polytopes becomes quite complicated. The full list consists of $31$ combinatorial classes of polytopes including the $11$ known projectively unique polytopes in $\RR^4$. These $11$ are {\em combinatorially psd-minimal}, meaning that all polytopes in each of their combinatorial classes are psd-minimal. For the remaining 20 classes, there are non-trivial conditions on psd-minimality. We refer the reader to \cite{GPRT} for the result in $\RR^4$.

Beyond $\RR^4$, a classification of all psd-minimal polytopes looks to be cumbersome. 
On the other hand, there are families of polytopes of increasing dimension that are all psd-minimal. 
A polytope $P \subset \RR^n$ is $2$-{\em level} if for every facet of $P$, 
all vertices of $P$ are either on this facet or on a single other parallel translate of the affine span of this facet. Examples of $2$-level polytopes include simplices, regular hypercubes, regular cross-polytopes, and hypersimplices. All $2$-level polytopes are psd-minimal, but not conversely. For example, the regular bisimplex in $\RR^3$ is psd-minimal but not $2$-level. Recall from Example~\ref{ex:stable set polytope} that the stable set polytopes of perfect graphs are psd-minimal. In fact, they are also $2$-level and it was shown in \cite[Corollary~4.11]{GPT-thetabodies} that all down-closed $0/1$-polytopes that are $2$-level are in fact stable set polytopes of perfect graphs. On the other hand, \cite[Theorem 9]{GPT-lifts} shows that $\textup{STAB}(G)$ is not psd-minimal if $G$ is not perfect.

\section{Spectrahedral lifts and sum of squares polynomials}
\label{sec:sos}

We now look at a systematic technique that creates a sequence of nested outer approximations of the 
convex hull of an algebraic set. These approximations come from projections of spectrahedra and are called {\em theta bodies}. In many cases, the theta 
body at the $k$th step will equal the closure of the convex hull of the algebraic set and hence the spectrahedron that it 
was a projection of, is a lift of this convex set. We examine how this type of lift fits into our general picture. 

Let $I = \langle p_1, \ldots, p_s \rangle 
\subset \RR[x] := \RR[x_1, \ldots, x_n]$ be a polynomial ideal and let $\VV_\RR(I) \subset \RR^n$ be the real points in its variety. Then the closure 
of the convex hull of $\VV_\RR(I)$,  $C := \overline{\conv(\VV_\RR(I))}$, is a closed convex semialgebraic set. Since we are only interested in the convex hull of 
$\VV_\RR(I)$, and the convex hull is defined by its extreme points, we may assume without loss of generality that $I$ is the largest ideal that vanishes on the extreme points of $C$.

Recall that $C$ is the intersection of all half spaces containing $\VV_\RR(I)$. Each half space is expressed 
as $l(x) \geq 0$ for some linear polynomial $l \in \RR[x]$ that is nonnegative on $\VV_\RR(I)$. A linear polynomial $l$ is nonnegative on 
$\VV_\RR(I)$ if there exists polynomials $h_i \in \RR[x]$ such that $l - \sum h_i^2 \in I$. In this case we say that $l$ is a {\em sum of squares (sos)} mod $I$, and if the degree of each $h_i$ is at most $k$, then we say that $l$ is $k$-{\em sos mod} $I$. Define the $k$th theta body of $I$ to be the set 
$$\textup{TH}_k(I) := \left\{ x \in \RR^n \,:\, l(x) \geq 0 \,\,\forall \,\, l \textup{ linear and } k\textup{-sos mod } I \right\}.$$
  
Note that all theta bodies are closed convex semialgebraic sets and they form a series of nested outer approximations of $C$ 
since $$\textup{TH}_i(I) \supseteq \textup{TH}_{i+1}(I) \supseteq C \,\,\textup{ for all } \,\,i \geq 1.$$
We say that $I$ is $\textup{TH}_k${\em-exact} if $\textup{TH}_k(I) = C$. 
The terminology is inspired by Lov\'asz's theta body $\textup{TH}(G)$ from Example~\ref{ex:stable set polytope} which is precisely $\textup{TH}_1(I_G)$ of the ideal 
$$I_G = \langle x_i^2 - x_i, \,\,\forall i=1,\ldots, n \rangle + \langle x_ix_j, \,\,\forall \{i,j\} \in E(G) \rangle.$$
In our terminology, $I_G$ is $\textup{TH}_1$-exact when $G$ is a perfect graph.

\begin{figure} 
\includegraphics[scale=0.23]{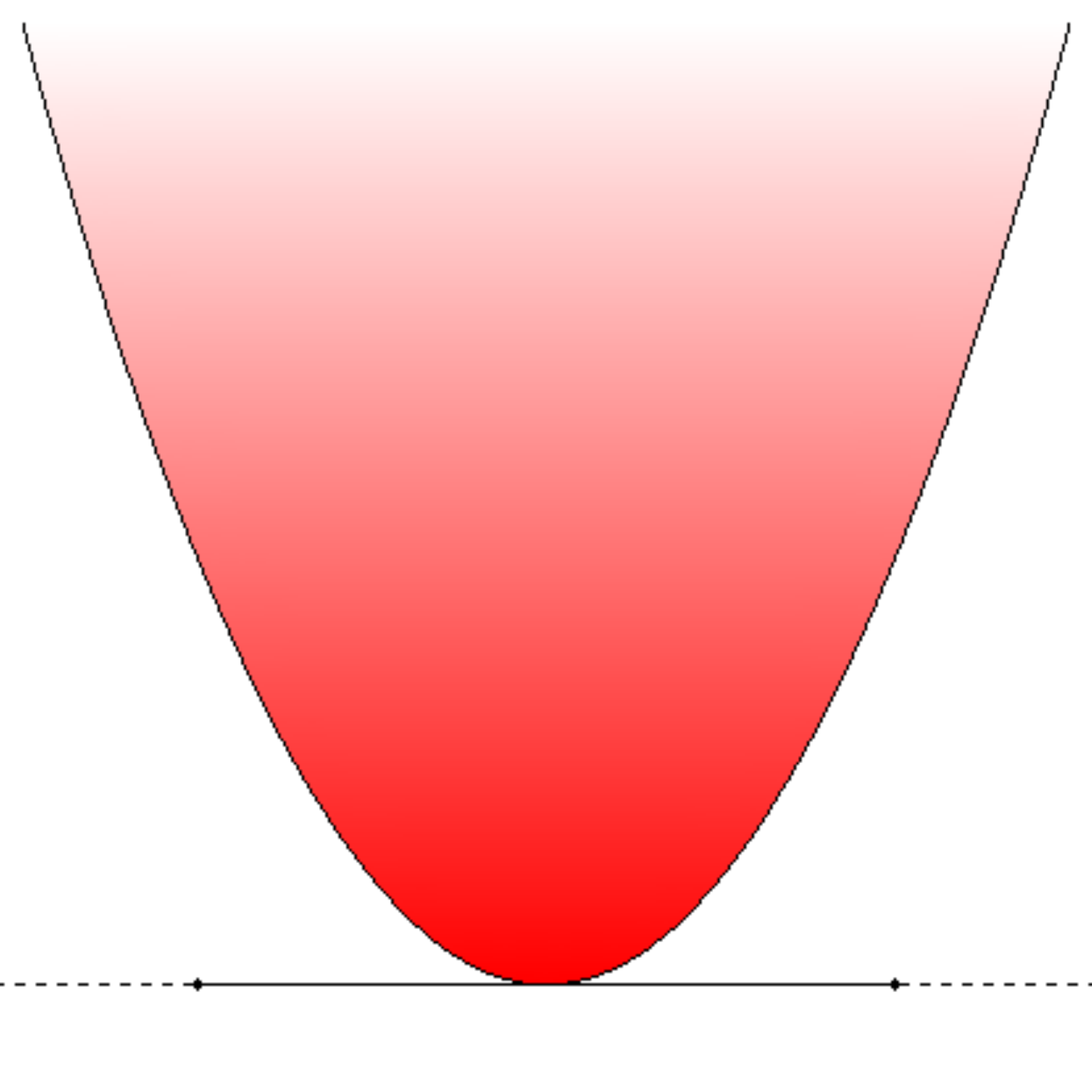}
\includegraphics[scale=0.23]{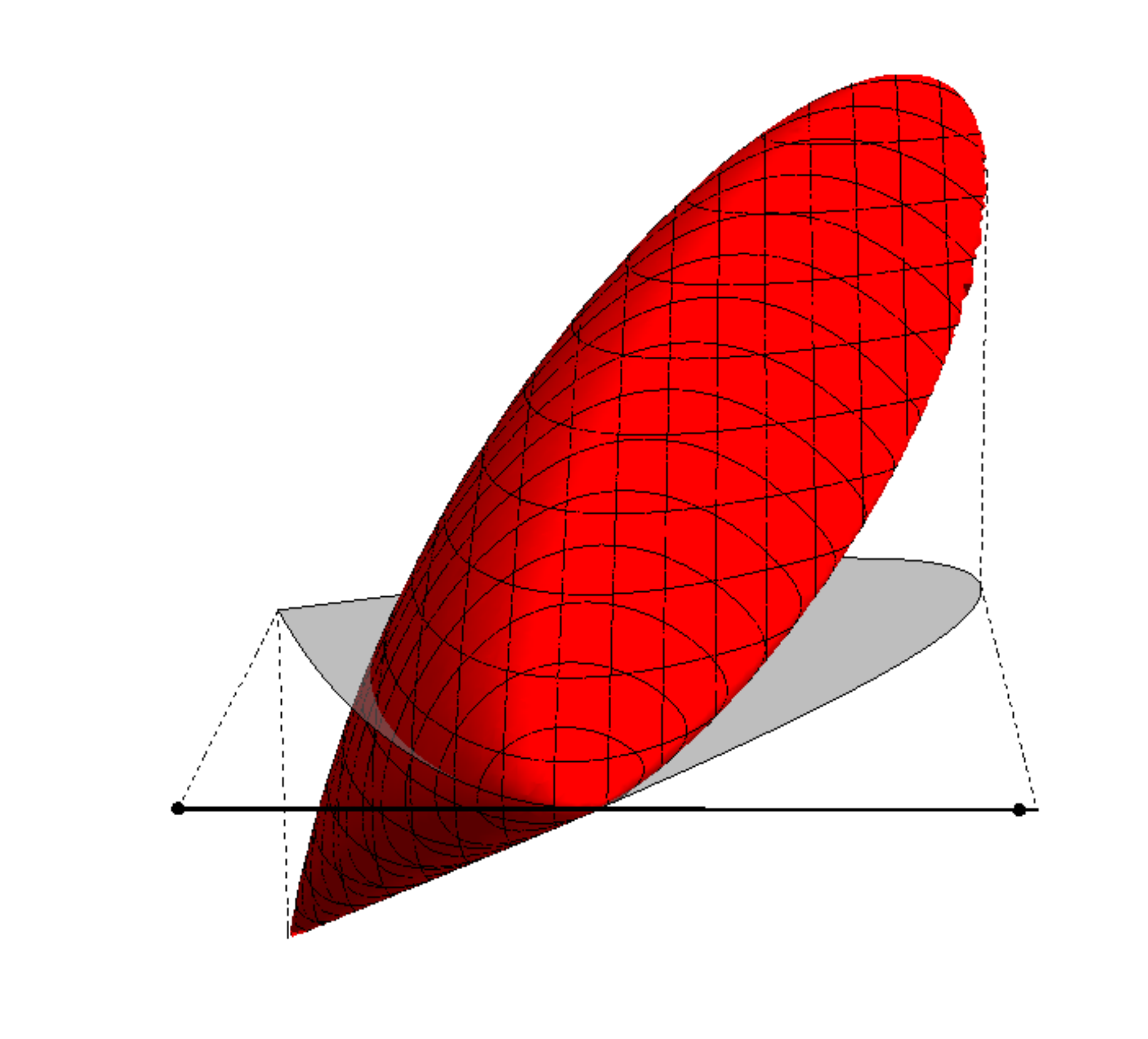}
\includegraphics[scale=0.23]{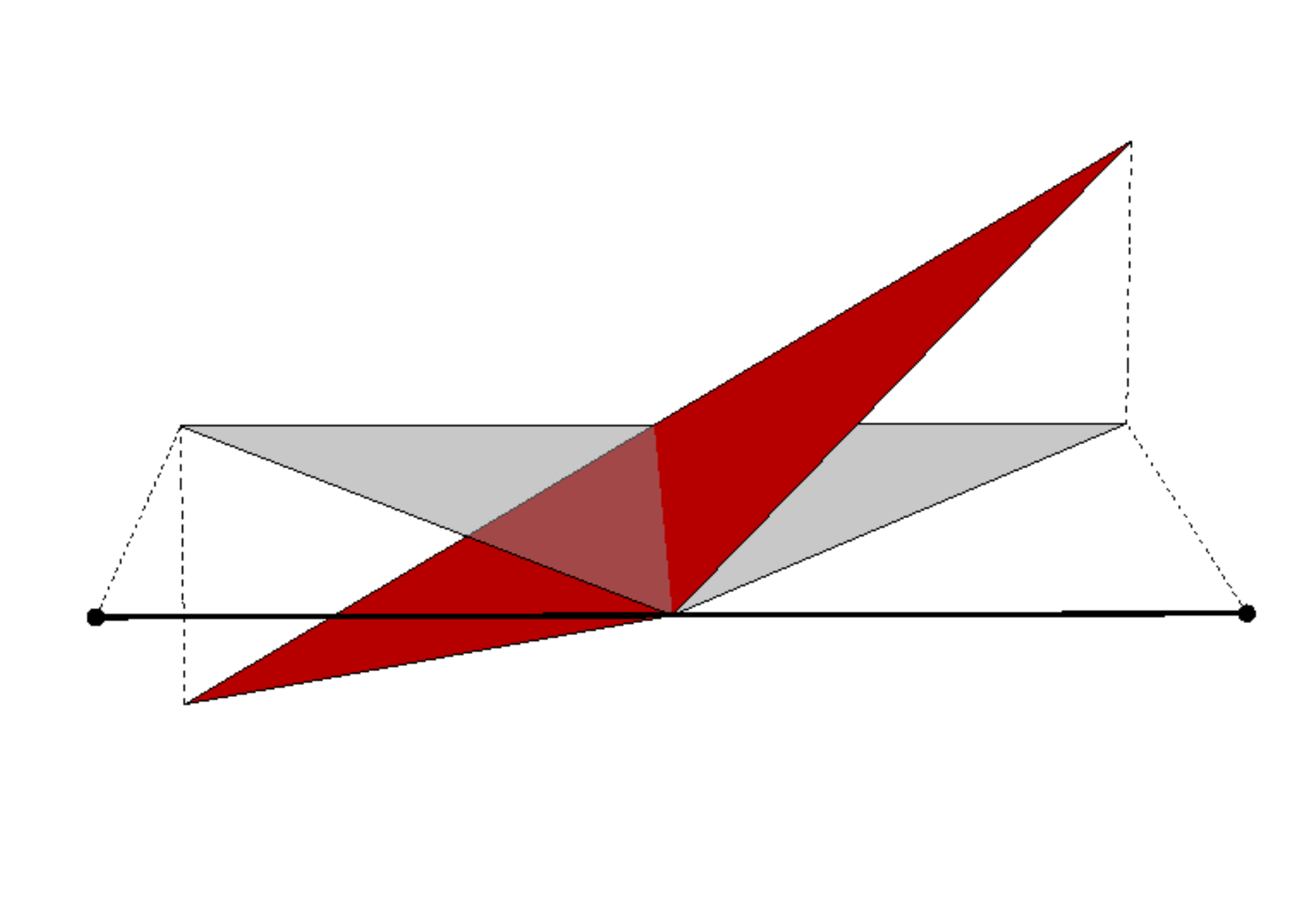}
\caption{The theta bodies of $I = \langle
  (x+1)x(x-1)^2 \rangle$ and their spectrahedral lifts. The first theta body is the entire real line, the second is slightly larger than 
  $[-1,1]$ and the third is exactly $[-1,1]$. \label{fig:theta bodies}
 }
\end{figure}

Theta bodies of a general polynomial ideal $I \subset \RR[x]$ were defined in \cite{GPT-thetabodies}, and it was shown 
there that $I$ is $\textup{TH}_k$-exact if and only if $C$ admits a specific type of spectrahedral lift.   This lift has 
size equal to the number of monomials in $\RR[x]$ of degree at most $k$. Let $[x]_k$ denote the vector of all 
monomials of degree at most $k$ in $\RR[x]$. When $\textup{TH}_k(I) = C$, Theorem~\ref{thm:general_Yannakakis} promises two maps 
$A$ and $B$ that factorize the slack operator of 
$C$. These operators are very special.

\begin{theorem} \cite[Theorem~11]{GPT-lifts} \label{thm:theta body factorizations}
The slack operator of $C = \overline{\conv(\VV_\RR(I))}$ has a factorization in which $A(x) = [x]_k[x]_k^\top$ 
if and only if $C= \textup{TH}_k(I)$. Further, the map 
$B$ sends each linear functional $l(x)$ corresponding to an extreme point of the polar of $C$ to a 
psd matrix $Q_l$ such that $l(x) - x^\top Q_l x \in I$ certifying that $l(x)$ is nonnegative on $\VV_\RR(I)$.
\end{theorem}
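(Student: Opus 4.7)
The plan centers on unfolding the factorization condition $\langle A(x), B(y) \rangle = 1 - \langle x,y \rangle$ under the restriction $A(x) = [x]_k[x]_k^\top$. Since $B(y)$ lies in $(\PSD^N)^* = \PSD^N$ (with $N$ the length of $[x]_k$) and $\langle uu^\top, Q \rangle = u^\top Q u$, the factorization condition becomes
\[
1 - \langle x, y \rangle \;=\; [x]_k^\top B(y)\, [x]_k \qquad \text{for all } (x,y) \in \ext(C) \times \ext(C^\circ).
\]
Both directions then rest on the standard dictionary between matrices $Q \in \PSD^N$ and sums of squares of polynomials of degree $\leq k$: any factorization $Q = \sum_i v_iv_i^\top$ corresponds to $[x]_k^\top Q [x]_k = \sum_i (v_i^\top [x]_k)^2$.

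For the ``only if'' direction, I would assume such a factorization exists and show $C = \textup{TH}_k(I)$. Given $y \in \ext(C^\circ)$, decompose $B(y) = \sum_i v_iv_i^\top$ and set $h_i(x) := v_i^\top [x]_k$. The factorization identity says that the polynomial $l_y(x) - \sum_i h_i(x)^2$, where $l_y(x) := 1 - \langle x, y \rangle$, vanishes on $\ext(C)$; since by the standing assumption $I$ is the largest ideal vanishing on $\ext(C)$, this polynomial actually lies in $I$, exhibiting $l_y$ as $k$-sos mod $I$. Hence every $y \in \ext(C^\circ)$ contributes a $k$-sos linear inequality supporting $C$, and because $C = \{x : \langle x, y \rangle \leq 1 \text{ for all } y \in \ext(C^\circ)\}$ by convex duality, we obtain $\textup{TH}_k(I) \subseteq C$. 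The opposite inclusion is automatic (each $k$-sos linear polynomial is nonnegative on $\VV_\RR(I)$ and hence on $C$), so $C = \textup{TH}_k(I)$.

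For the ``if'' direction, assume $C = \textup{TH}_k(I)$. For each $y \in \ext(C^\circ)$, the linear functional $l_y$ is nonnegative on $\VV_\RR(I)$, and TH-exactness in the form established in \cite{GPT-thetabodies} produces a $k$-sos certificate $l_y \equiv \sum_i h_i^2 \pmod{I}$ with $\deg h_i \leq k$. Encoding $h_i = v_i^\top [x]_k$ and setting $B(y) := Q_{l_y} := \sum_i v_iv_i^\top \in \PSD^N$, we obtain on $\ext(C) \subset \VV_\RR(I)$ the identity
\[
1 - \langle x, y \rangle \;=\; \sum_i h_i(x)^2 \;=\; [x]_k^\top Q_{l_y} [x]_k \;=\; \bigl\langle [x]_k[x]_k^\top,\, B(y) \bigr\rangle,
\]
i.e.\ the desired factorization with $A(x) = [x]_k[x]_k^\top$. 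This construction automatically yields the final clause of the theorem, since $B(y) = Q_{l_y}$ is PSD with $l_y(x) - [x]_k^\top Q_{l_y}[x]_k \in I$, certifying the nonnegativity of $l_y$ on $\VV_\RR(I)$.

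The delicate step is the extraction, in the ``if'' direction, of a \emph{concrete} $k$-sos certificate for each individual $l_y$ at an extreme point $y$ of $C^\circ$ from the geometric hypothesis $C = \textup{TH}_k(I)$; a priori, TH-exactness only says that the $k$-sos linear polynomials cut out $C$ as an intersection of halfspaces, not that every supporting linear functional of $C$ is itself $k$-sos. The proof must invoke the characterization from \cite{GPT-thetabodies} that identifies TH-exactness with the stronger algebraic statement that \emph{every} linear polynomial nonnegative on $\VV_\RR(I)$ is $k$-sos mod $I$; once this equivalence is in hand, the rest of the argument is essentially bookkeeping between PSD factorizations and SOS decompositions.
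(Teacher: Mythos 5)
The survey states this as a cited result (\cite[Theorem~11]{GPT-lifts}) and gives no proof, so there is no in-paper argument to compare against; I will assess your proposal on its own terms.

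Your ``only if'' direction is correct and is the standard argument: decompose $B(y)=\sum_i v_iv_i^\top$, observe that $l_y-\sum_i(v_i^\top[x]_k)^2$ vanishes on $\ext(C)$, hence lies in $I$ by the standing convention $I=I(\ext(C))$, so $l_y$ is $k$-sos mod $I$; combined with $C^\circ=\conv(\ext(C^\circ))$ and the bipolar theorem this gives $\textup{TH}_k(I)\subseteq C$, and the reverse inclusion is automatic. The final clause of the theorem is also handled correctly by your explicit construction of $B(y)=Q_{l_y}$.

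The ``if'' direction, however, has a real gap, and you have in fact put your finger on exactly the right place. You assert that $\textup{TH}_k$-exactness is equivalent to the statement ``every linear polynomial nonnegative on $\VV_\RR(I)$ is $k$-sos mod $I$'' (the $(1,k)$-sos property), citing \cite{GPT-thetabodies} for this equivalence. But \cite{GPT-thetabodies} establishes only the implication $(1,k)$-sos $\Rightarrow$ $\textup{TH}_k$-exact; the converse is precisely the nontrivial content of the present theorem, not a black box one can invoke. Concretely: let $\Sigma_k$ denote the cone of linear polynomials that are $k$-sos mod $I$, and $L_{\geq 0}$ the cone of linear polynomials nonnegative on $C$. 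The existence of the factorization with $A(x)=[x]_k[x]_k^\top$ is equivalent to $l_y\in\Sigma_k$ for every $y\in\ext(C^\circ)$, which (since $L_{\geq 0}=\cone\{l_y:y\in\ext(C^\circ)\}$ and $\Sigma_k$ is a convex cone contained in $L_{\geq 0}$) is equivalent to $\Sigma_k=L_{\geq 0}$. By contrast, $\textup{TH}_k(I)=C$ is a statement about polars and a priori only gives $\overline{\Sigma_k}=L_{\geq 0}$. Since $\Sigma_k$ is the linear image of a spectrahedron, it need not be closed, so passing from $\overline{\Sigma_k}=L_{\geq 0}$ to $\Sigma_k=L_{\geq 0}$ requires an argument. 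Invoking the equivalence as a known fact is therefore circular: it \emph{is} the theorem. To complete the ``if'' direction you would need either a closedness argument for $\Sigma_k$ in this setting (for instance, an analysis of which recession directions of the lifted spectrahedron lie in the kernel of the projection, i.e., of nonzero $Q\succeq 0$ with $[x]_k^\top Q[x]_k\in I$), or some other mechanism that produces a bona fide certificate $Q_{l_y}$ for each extreme $y$ directly from the set-theoretic equality $\textup{TH}_k(I)=C$. As written, that step is missing.
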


In fact, each theta body is the projection of a spectrahedron. Figure~\ref{fig:theta bodies} shows the theta bodies and their spectrahedral lifts of the ideal $I =  \langle (x+1)x(x-1)^2 \rangle$. In this case, $C = [-1,1] \subset \RR$.

While theta bodies offer a systematic method to sometimes construct a spectrahedral lift of $C$,
they may not offer the most efficient lift of this set. So an immediate question is whether there might be radically different 
types of spectrahedral lifts for $C$. Since the projection of a spectrahedron is necessarily convex and 
semialgebraic,  a set $C$ can have a spectrahedral lift only if it is convex and semialgebraic. So a second question is whether 
every convex semialgebraic set has a spectrahedral lift. This question gained prominence from \cite{Nemirovski}, 
and Helton and Nie showed that indeed a compact convex semialgebraic set has a spectrahedral lift if its boundary is sufficiently smooth and has positive curvature. They then conjectured that every convex semialgebraic set has a spectrahedral lift, see \cite{HeltonNie2009} and \cite{HeltonNie2010}. This conjecture was very recently disproved by Scheiderer who exhibited many explicit counter-examples \cite{Scheiderer-HeltonNie}. 
All these sets therefore have infinite psd rank. 

Recall that a morphism $\phi \,:\, X \rightarrow Y$ between two affine real varieties creates a ring homomorphism 
$\phi^\ast \,:\, \RR[Y] \rightarrow \RR[X]$ between their coordinate rings. By a real variety we mean a variety defined by polynomials with real coefficients. Let $X_\RR$ denote the $\RR$-points of $X$.

\begin{theorem} \cite[Theorem 3.14]{Scheiderer-HeltonNie} \label{thm:Scheiderer's key theorem}
Let $S \subset \RR^n$ be a semialgebraic set and let $C$ be the closure of its convex hull. Then $C$ has a spectrahedral lift 
if and only if there is a morphism $\phi \,:\, X \rightarrow \mathbb{A}^n$ of affine real varieties and a finite-dimensional 
$\RR$-linear subspace $U$ in the coordinate ring $\RR[X]$ such that 
\begin{enumerate}
\item $S \subset \phi(X_\RR)$,
\item for every linear polynomial $l \in \RR[x]$ that is nonnegative on $S$, the element $\phi^\ast(l)$ of 
$\RR[X]$ is a sum of squares of elements in $U$.
\end{enumerate}
\end{theorem}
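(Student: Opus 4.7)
The plan is to use Theorem~\ref{thm:general_Yannakakis} as a bridge between spectrahedral lifts of $C$ and $\PSD^k$-factorizations of its slack operator, translating such factorizations into either the pair $(X,\phi,U)$ or vice versa.

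For the forward direction, suppose $C = \pi(\PSD^k \cap L)$ is a proper spectrahedral lift. Every $Z \in \PSD^k$ factors as $YY^\top$ for some $k \times k$ real matrix $Y$, so define the affine real variety
$$X := \{Y \in \RR^{k \times k} : YY^\top \in L\},$$
cut out by the quadratic equations encoding $YY^\top \in L$, set $\phi(Y) := \pi(YY^\top)$, and let $U \subset \RR[X]$ be the $\RR$-span of the coordinate functions $Y_{ij}$. Then $\phi(X_\RR) = \pi(\PSD^k \cap L) = C \supseteq S$, giving condition~(1). For condition~(2), fix a linear polynomial $l(x) = c_0 + \sum c_i x_i$ that is nonnegative on $S$, hence on $C$. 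Then $l \circ \pi$ is an affine functional nonnegative on the spectrahedron $\PSD^k \cap L$; by SDP strong duality (Slater's condition holds because the lift is proper) there exist $Q \succeq 0$ and an affine functional $h$ vanishing on $L$ with $l(\pi(Z)) = \langle Q, Z \rangle + h(Z)$ for every symmetric $k \times k$ matrix $Z$. Substituting $Z = YY^\top$ kills the $h$-term on $X$, so $\phi^*(l) = \langle Q, YY^\top \rangle$ as an element of $\RR[X]$. Factoring $Q = \sum_\alpha q_\alpha q_\alpha^\top$ with $q_\alpha \in \RR^k$ gives
$$\phi^*(l) = \sum_\alpha \|Y^\top q_\alpha\|^2 = \sum_{\alpha,\beta} (q_\alpha^\top Y e_\beta)^2,$$
manifestly a sum of squares of elements of $U$.

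For the backward direction, given data $(X,\phi,U)$, first enlarge $U$ to contain the constant function $1$; since $C$ is a convex body, $S$ is bounded and the polynomials $M \pm x_i$ are nonnegative on $S$ for $M$ large enough, so hypothesis~(2) forces $\phi^*(x_i) \in V := \mathrm{span}\{u_j u_k\} \subseteq \RR[X]$. Pick a basis $u_1,\ldots,u_d$ of $U$ and, for $i = 0,1,\ldots,n$ (with $x_0 := 1$), choose symmetric $A_i \in \mathrm{Sym}^d(\RR)$ with $\phi^*(x_i) = \sum_{j,k}(A_i)_{jk}\, u_j u_k$ in $\RR[X]$. Let $N \subset \mathrm{Sym}^d(\RR)$ be the kernel of the surjection $M \mapsto \sum M_{jk} u_j u_k$ onto $V$, and set
$$L' := \{Z \in \mathrm{Sym}^d(\RR) : \langle M, Z \rangle = 0 \ \forall M \in N,\ \langle A_0, Z \rangle = 1\}, \qquad Q' := \PSD^d \cap L',$$
together with the linear map $\pi'(Z) := (\langle A_1, Z \rangle, \ldots, \langle A_n, Z \rangle)$, well defined on $L'$ modulo $N$. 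For each $p \in X_\RR$, $u(p) u(p)^\top \in Q'$ and $\pi'(u(p) u(p)^\top) = \phi(p)$, hence $\pi'(Q') \supseteq S$. Conversely, for $Z \in Q'$ and any linear polynomial $l \geq 0$ on $C$, hypothesis~(2) gives $\phi^*(l) = \sum_i (v_i^\top u)^2$; writing $A_l := c_0 A_0 + \sum c_i A_i$ and $B_l := \sum v_i v_i^\top \succeq 0$, the equality of the two sos expressions in $\RR[X]$ forces $A_l - B_l \in N$, so $l(\pi'(Z)) = \langle A_l, Z \rangle = \langle B_l, Z \rangle \geq 0$, proving $\pi'(Z) \in C$. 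A compactness argument then upgrades $\pi'(Q') \supseteq \conv(S)$ to $\pi'(Q') \supseteq \overline{\conv(S)} = C$, yielding $\pi'(Q') = C$.

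The main obstacle is ensuring that the sum-of-squares identity from hypothesis~(2) holds as a genuine equality in the coordinate ring $\RR[X]$, not merely pointwise on $X_\RR$; this is precisely what justifies the step $A_l - B_l \in N$ and propagates nonnegativity from the set $\phi(X_\RR)$ out to the entire spectrahedron $Q'$. A secondary subtlety is obtaining the equality $\pi'(Q') = C$ rather than mere density, since spectrahedral shadows need not be closed in general; here the convex-body assumption on $C$ (and hence boundedness of $S$) supplies the needed compactness.
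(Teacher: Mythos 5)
The paper itself does not prove this theorem; it is quoted verbatim from Scheiderer's paper and used as a black box, so there is no ``paper's own proof'' to compare against. Judged on its own, your argument has the right architecture (the forward direction via SDP duality on a proper lift, the backward direction by building a moment-type spectrahedron from a basis of $U$ and the matrices $A_i$ realizing $\phi^*(x_i)$ in $\mathrm{span}\{u_ju_k\}$), and the central step $A_l - B_l \in N$ is exactly the point that makes the construction work. However, there are two gaps.

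First, in the forward direction the duality statement is slightly off. With $L = w_0 + L_0$ and $w_0 \in \mathrm{int}\,\PSD^k$, strong duality gives $Q \succeq 0$ and an affine $h$ with $l(\pi(Z)) = \langle Q, Z\rangle + h(Z)$ where $h$ restricts to the \emph{constant} $c := \min_{\PSD^k\cap L} l\circ\pi \geq 0$ on $L$, not to $0$. There is no reason you can force $c=0$ or absorb $c$ into a psd matrix supported on $L_0^{\perp}$. The fix is cheap but necessary: you must put the constant function $1$ into $U$ (so that $c = (\sqrt c)^2$ is a square of an element of $U$), which you do in the backward direction but not here. Note that condition (2) applied to $l\equiv 1$ already forces $1$ to be an sos of $U$, so $U$ containing $1$ is not an optional convenience.

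Second, and more seriously, the closing ``compactness argument'' in the backward direction is a genuine gap. You have $\conv(S) \subseteq \pi'(Q') \subseteq C = \overline{\conv(S)}$, and $\pi'(Q')$ is convex, but spectrahedral shadows need not be closed, and boundedness of $C$ does not by itself make $Q'$ bounded: the recession cone $\PSD^d \cap L_0'$ of $Q'$ can be nontrivial with $\pi'$ killing it, in which case $Q'$ is unbounded and $\pi'(Q')$ may fail to be closed (e.g.\ projecting $\{xy\geq 1,\ x,y\geq 0\}$ onto $x$ gives the non-closed set $(0,\infty)$). So $\pi'(Q')$ a priori could be a proper dense convex subset of $C$ missing part of $\partial C$, which would fail Definition~\ref{def:K-lift}. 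To close this you need an actual theorem --- e.g.\ that the closure of a (bounded) spectrahedral shadow is again a spectrahedral shadow, or a modification of the construction (replacing $Q'$ by a compact slice, or passing to the dual side as in Theorem~\ref{thm:general_Yannakakis}) that produces a closed image directly. As written, ``a compactness argument then upgrades\ldots'' is an assertion, not an argument, and it is exactly at the point where such representability statements typically break.
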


This theorem offers a set of necessary and sufficient conditions for the existence of a spectrahedral lift of the convex hull of a semialgebraic set by working through an intermediate variety $X$. The setting is more general than that in Theorem~\ref{thm:theta body factorizations} where we only considered convex hulls of algebraic sets. Regardless, the spirit of condition (2) is that 
the theta body method (or more generally, Lasserre's method \cite{Lasserre}) is essentially universal with the 
subspace $U \subseteq \RR[X]$ playing the role of degree bounds on the 
sos nonnegativity certificates that were required for $\textup{TH}_k$-exactness. Theorem~\ref{thm:Scheiderer's key theorem} 
provides counterexamples to the Helton-Nie conjecture. 

\begin{theorem} \cite[Theorem 4.23]{Scheiderer-HeltonNie}
Let $S \subset \RR^n$ be any semialgebraic set with $\dim(S) \geq 2$. Then for some positive integer $k$, there exists a 
polynomial map $\phi \,:\, S \rightarrow \RR^k$ such that the closed convex hull of $\phi(S) \subset \RR^k$ is not the linear 
image of a spectrahedron.
\end{theorem}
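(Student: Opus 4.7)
The plan is to prove the contrapositive via Theorem~\ref{thm:Scheiderer's key theorem}: if $\overline{\conv(\phi(S))}$ admitted a spectrahedral lift, then every linear support function of this convex hull would pull back, through some morphism $\phi' : X \to \mathbb{A}^n$ covering it, to a sum of squares drawn from a single finite-dimensional subspace $U \subset \RR[X]$. The strategy is therefore to choose $\phi$ so that the family of linear polynomials nonnegative on $\overline{\conv(\phi(S))}$ pulls back to a family of nonnegative polynomials on $X$ whose sum-of-squares complexity is genuinely unbounded, contradicting the finite-dimensionality of $U$.

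First I would reduce to a canonical geometric setting. Since $\dim(S) \geq 2$, after restricting and composing with inclusions, $S$ contains a semialgebraic piece Zariski-dense in some irreducible real affine surface, and any polynomial map on such a piece is covered by a polynomial map on $S$. It therefore suffices to construct, for one such two-dimensional semialgebraic set $S_0$, a polynomial map $\phi: S_0 \to \RR^k$ for some $k$ whose image has convex hull violating the sos condition in Theorem~\ref{thm:Scheiderer's key theorem}.

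Second, I would construct the obstruction. The core ingredient is the existence, on an appropriate two-dimensional real variety $X$, of a family of nonnegative polynomials $\{f_t\}_{t \in T}$ with the property that no single finite-dimensional linear subspace of $\RR[X]$ can simultaneously sos-represent all of them. Assuming such a family is available, I would design $\phi$ so that its coordinates generate an algebra on $S_0$ in which a pencil of supporting hyperplanes of $\overline{\conv(\phi(S_0))}$ pulls back precisely to the family $\{f_t\}$; a natural recipe is to let the coordinates of $\phi$ be a carefully chosen tuple of polynomials on $S_0$ so that nonnegative linear functionals on $\RR^k$ correspond, under $\phi^\ast$, to the prescribed $f_t$. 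If then $\overline{\conv(\phi(S_0))}$ had a spectrahedral lift, condition~(2) of Theorem~\ref{thm:Scheiderer's key theorem} would bound the sos complexity of all the $f_t$ by the fixed dimension of $U$, yielding a contradiction.

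The main obstacle is precisely the construction of such a family $\{f_t\}$ together with its orchestration via $\phi$; this is the deep real-algebraic-geometry content of Scheiderer's breakthrough, and relies on positivity obstructions on surfaces using valuations, local-global principles over function fields, and the failure of Hilbert's 17th-problem-style representations to come with uniform degree bounds. These obstructions become available precisely when $\dim \geq 2$, which is why the hypothesis in the theorem is sharp: in dimension one, nonnegative forms on real curves admit sos representations controlled by degree alone, and no such divergent family $\{f_t\}$ exists. Once the family is in hand, however, the application of Theorem~\ref{thm:Scheiderer's key theorem} is essentially mechanical and yields $\psdrank(\overline{\conv(\phi(S))}) = \infty$ as claimed.
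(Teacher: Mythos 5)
The paper does not supply a proof of this statement: it is quoted as \cite[Theorem 4.23]{Scheiderer-HeltonNie} with no argument, and the surrounding text only notes that it follows from Theorem~\ref{thm:Scheiderer's key theorem}. So there is no ``paper proof'' to compare against; I can only evaluate your outline on its own terms.

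Your architectural picture is right: the natural route is the contrapositive of Theorem~\ref{thm:Scheiderer's key theorem}, and you correctly identify that one must produce a $\phi$ whose image's supporting linear functionals pull back to a collection of nonnegative elements whose sum-of-squares representability cannot be confined to any one finite-dimensional subspace $U$. But as written this is a proof template, not a proof. You explicitly say ``assuming such a family is available'' and then declare the construction of that family to be ``the deep real-algebraic-geometry content of Scheiderer's breakthrough.'' That construction is the entire theorem; without it, nothing has been proved. The reduction step (restrict $S$ to a two-dimensional piece, compose, etc.) and the final invocation of Theorem~\ref{thm:Scheiderer's key theorem} are routine once the obstruction exists; the obstruction itself is the content, and you have left it unbuilt.

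There is also a quantifier subtlety you gloss over which makes the gap more serious than you acknowledge. Theorem~\ref{thm:Scheiderer's key theorem} says $C$ fails to be a spectrahedral shadow only if \emph{no} triple $(X,\phi,U)$ works --- $X$ need not be the ambient space, $\phi$ need not be the map you started with, and $U$ need not be a space of bounded-degree polynomials. So exhibiting a family of nonnegatives on a fixed surface with unbounded degree-$d$ sos complexity does not, by itself, rule out the existence of some cleverer morphism to a different variety with a different finite-dimensional $U$ doing the job. Scheiderer's argument has to beat every such candidate simultaneously; it does so via his structural results on sums of squares over two-dimensional real varieties (roughly, that the sos cone has infinite codimension in the nonnegativity cone in a way stable under the allowed changes of variety). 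Your outline mentions ``valuations'' and ``local-global principles'' as vague markers of this difficulty but gives no indication of how the argument would survive the quantification over all $(X,\phi,U)$. Until that is supplied, the proposal records the right strategy but does not constitute a proof.
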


These results show, among other examples, that there are high enough Veronese embeddings of semialgebraic sets that cannot be 
the projections of spectrahedra.

\begin{corollary} \cite[Corollary 4.24]{Scheiderer-HeltonNie}
Let $n,d$ be positive integers with $n \geq 3, d \geq 4$ or $n=2$ and $d \geq 6$. Let $m_1, \ldots, m_N$ be the non-constant 
monomials in $\RR[x]$ of degree at most $d$. Then for any semialgebraic set $S \subseteq \RR^n$ with non-empty interior, 
the closed convex hull of 
$$m(S) := \left\{ (m_1(s), \ldots, m_N(s)) \,:\, s \in S   \right\} \subset \RR^N$$
is not the linear image of a spectrahedron.
\end{corollary}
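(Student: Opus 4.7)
The plan is to derive a contradiction from Theorem~\ref{thm:Scheiderer's key theorem} by translating a hypothetical spectrahedral lift of $C := \overline{\conv(m(S))}$ into a uniform sum-of-squares certificate for every nonnegative polynomial of degree at most $d$ on $\RR^n$, then invoking the failure of any such uniformity in the stated range of $(n,d)$.

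Suppose for contradiction that $C$ admits a spectrahedral lift. Applied to the semialgebraic set $m(S) \subset \RR^N$, Theorem~\ref{thm:Scheiderer's key theorem} yields a morphism $\phi \,:\, X \to \mathbb{A}^N$ of affine real varieties and a finite-dimensional $\RR$-linear subspace $U \subseteq \RR[X]$ such that $m(S) \subseteq \phi(X_\RR)$ and, for every affine-linear $l \in \RR[y_1,\ldots,y_N]$ that is nonnegative on $m(S)$, the pullback $\phi^\ast(l)$ is a sum of squares of elements of $U$. Pullback along the monomial map $m \,:\, \mathbb{A}^n \to \mathbb{A}^N$ identifies affine-linear polynomials $l$ in $y$ with polynomials $p \in \RR[x_1,\ldots,x_n]$ of degree at most $d$ via $p(x) = l(m_1(x),\ldots,m_N(x))$, and under this identification $l \geq 0$ on $m(S)$ iff $p \geq 0$ on $S$. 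Since $S$ has non-empty interior, every polynomial of degree at most $d$ that is nonnegative on all of $\RR^n$ lies in this cone, so the entire infinite-dimensional cone of globally nonnegative degree-$\leq d$ forms is forced, through $\phi$, into sos certificates drawn from the single finite-dimensional space $U$.

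The contradiction is precisely that no such finite-dimensional sos certification can exist when $n \geq 3, d \geq 4$ or $n = 2, d \geq 6$. This is the regime in which Hilbert's 1888 classification produces nonnegative forms that are not sums of squares, and Scheiderer's deeper non-finite-generation results (the engine behind the preceding theorem) sharpen this to the statement that even after passage to an arbitrary finite cover $X \to \mathbb{A}^N$, the pullbacks of globally nonnegative degree-$\leq d$ forms cannot be uniformly certified by any finite-dimensional subspace of $\RR[X]$. Exhibiting an explicit family of nonnegative polynomials whose pullback certificates require unbounded dimension in $\RR[X]$ thus contradicts the existence of the fixed $U$. The main obstacle is this last step: the non-finite-generation must be proven intrinsically, independent of the intermediate variety $X$ produced by Theorem~\ref{thm:Scheiderer's key theorem}. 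Scheiderer accomplishes this through a careful base-change analysis of positivity certificates, using crucially that $S$ has non-empty interior so that global obstructions on $\RR^n$ actually restrict to obstructions on the semialgebraic set $S$.
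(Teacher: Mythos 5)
The paper itself does not prove this corollary; it cites Scheiderer's result, so there is no ``paper's own proof'' to compare against. Evaluating your proposal on its own terms: the reduction you set up is the right skeleton, and you state the ingredients correctly --- apply Theorem~\ref{thm:Scheiderer's key theorem} to $m(S)$, note that affine-linear functionals on $\RR^N$ nonnegative on $m(S)$ correspond bijectively, via the monomial map, to degree-$\leq d$ polynomials nonnegative on $S$, and note that nonempty interior of $S$ forces this cone to contain every globally nonnegative polynomial of degree at most $d$. The $(n,d)$ ranges are also chosen correctly to match Hilbert's 1888 classification after homogenizing in $n+1$ variables.

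But what you hand to the reader at the end is the entire substantive content of the corollary, not a finished step. Hilbert's theorem gives a nonnegative polynomial $p$ of degree $\leq d$ on $\RR^n$ that is not a sum of squares of polynomials on $\RR^n$. That fact alone does \emph{not} contradict the existence of the triple $(X,\phi,U)$: the sos certificate lives in $\RR[X]$ after pulling back along $\phi$, and $X$ is an arbitrary affine real variety with $m(S) \subset \phi(X_\RR)$, so being sos ``upstairs'' on $X$ is a strictly weaker demand than being sos on $\RR^n$. You acknowledge this yourself (``the non-finite-generation must be proven intrinsically, independent of the intermediate variety $X$''), but you then merely assert that ``Scheiderer accomplishes this through a careful base-change analysis.'' That sentence is a citation, not an argument. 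The step that is missing is exactly the hard theorem: that for the stated $(n,d)$ there is no morphism $\phi: X \to \mathbb A^N$ and no finite-dimensional $U \subset \RR[X]$ making \emph{all} of $\phi^\ast(l)$ (for $l$ in the infinite-dimensional cone) sums of squares of elements of $U$. This requires Scheiderer's machinery on stability of sos cones under base change, not Hilbert's 1888 result alone. There is also a small imprecision worth fixing: you speak of ``pullback along the monomial map $m$'' and ``through $\phi$'' in the same breath, but $m: \mathbb A^n \to \mathbb A^N$ and $\phi: X \to \mathbb A^N$ are two unrelated maps into $\mathbb A^N$ (not composed), and the sos membership is for $\phi^\ast(l) \in \RR[X]$, not for $m^\ast(l) = p \in \RR[x]$; the correspondence $l \leftrightarrow p$ is only used to identify which $l$'s are in play. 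Making this explicit would sharpen the statement of what must actually be ruled out.
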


In contrast, Scheiderer had previously shown that all convex semialgebraic sets in $\RR^2$ have spectrahedral lifts \cite{Scheiderer-curves}, thus proving the Helton-Nie conjecture in the plane.
The current smallest counterexamples to the Helton-Nie conjecture are in $\RR^{11}$. Is it possible that there is a counterexample in 
$\RR^3$?

\section{Notes}
There are many further results on spectrahedral lifts of convex sets beyond those mentioned here. 
An important topic that has been left out is that of symmetric spectrahedral lifts which are lifts that respect the symmetries of the convex set. Due to the symmetry requirement, such lifts are necessarily of size at least as large as the psd rank of the convex set. On the other hand, the symmetry restriction provides more tools to study such lifts and there are many beautiful results in this area, see \cite{FawziSaundersonParrilo-polygons}, \cite{FawziSaundersonParrilo-equivariantlifts}, 
\cite{FawziSaundersonParrilo-abeliangroups}.

Many specific examples of spectrahedral lifts of convex sets exist, and several of them have significance in applications. An easy general source  is the book \cite{SIAMBook}. In particular, Chapter 6 is dedicated to sdp representability of convex sets. This book includes a number of further topics in the area of {\em Convex Algebraic Geometry}. \\

{\bf Acknowledgments.} I am indebted to all my collaborators on the projects that contributed to this paper. 
I thank Pablo Parrilo for the construction in Example~\ref{ex:ball} and for several useful conversations. 
I also thank Hamza Fawzi and Jo\~ao Gouveia for comments on this paper, and Hamza for pointing out the  bound in Proposition~\ref
{prop:lower bound depending on n}. Claus Scheiderer and Daniel Plaumann were very helpful with the  
content of Section~\ref{sec:sos}.  
\bibliographystyle{alpha}
\bibliography{icmpaper}

\newcommand{\etalchar}[1]{$^{#1}$}
\begin{thebibliography}{FMP{\etalchar{+}}12}

\bibitem[BDP15]{BrietDadushPokutta}
J.~Bri\"et, D.~Dadush, and S.~Pokutta.
\newblock On the existence of 0/1 polytopes with high semidefinite extension
  complexity.
\newblock {\em Math. Program.}, 153(1, Ser. B):179--199, 2015.

\bibitem[BPT13]{SIAMBook}
G~Blekherman, P.~A. Parrilo, and R.R. Thomas.
\newblock Semidefinite optimization.
\newblock In {\em Semidefinite optimization and convex algebraic geometry},
  volume~13 of {\em MOS-SIAM Ser. Optim.}, pages 3--46. SIAM, Philadelphia, PA,
  2013.

\bibitem[BTN01]{BenTalNemirovski}
A.~Ben-Tal and A.~Nemirovski.
\newblock {\em Lectures on modern convex optimization}.
\newblock MPS/SIAM Series on Optimization. Society for Industrial and Applied
  Mathematics (SIAM), Philadelphia, PA; Mathematical Programming Society (MPS),
  Philadelphia, PA, 2001.
\newblock Analysis, algorithms, and engineering applications.

\bibitem[CCZ10]{CCZSurvey}
M.~Conforti, G.~Cornu\'ejols, and G.~Zambelli.
\newblock Extended formulations in combinatorial optimization.
\newblock {\em 4OR}, 8(1):1--48, 2010.

\bibitem[FGP{\etalchar{+}}15]{psdranksurvey}
H.~Fawzi, J.~Gouveia, P.~A. Parrilo, R.~Z. Robinson, and R.~R. Thomas.
\newblock Positive semidefinite rank.
\newblock {\em Math. Program.}, 153(1, Ser. B):133--177, 2015.

\bibitem[FMP{\etalchar{+}}12]{FMPTW}
S.~Fiorini, S.~Massar, S.~Pokutta, H.~R. Tiwary, and R.~de~Wolf.
\newblock Linear vs. semidefinite extended formulations: exponential separation
  and strong lower bounds.
\newblock In {\em S{TOC}'12---{P}roceedings of the 2012 {ACM} {S}ymposium on
  {T}heory of {C}omputing}, pages 95--106. ACM, New York, 2012.

\bibitem[FSED]{Fawzi-SafeyElDin}
H.~Fawzi and M.~Safey El~Din.
\newblock A lower bound on the positive semidefinite rank of convex bodies.
\newblock arXiv:1705.06996.

\bibitem[FSP15]{FawziSaundersonParrilo-equivariantlifts}
H.~Fawzi, J.~Saunderson, and P.~A. Parrilo.
\newblock Equivariant semidefinite lifts and sum-of-squares hierarchies.
\newblock {\em SIAM J. Optim.}, 25(4):2212--2243, 2015.

\bibitem[FSP16]{FawziSaundersonParrilo-abeliangroups}
H.~Fawzi, J.~Saunderson, and P.~A. Parrilo.
\newblock Sparse sums of squares on finite abelian groups and improved
  semidefinite lifts.
\newblock {\em Math. Program.}, 160(1-2, Ser. A):149--191, 2016.

\bibitem[FSP17]{FawziSaundersonParrilo-polygons}
H.~Fawzi, J.~Saunderson, and P.~A. Parrilo.
\newblock Equivariant semidefinite lifts of regular polygons.
\newblock {\em Math. Oper. Res.}, 42(2):472--494, 2017.

\bibitem[Goe15]{Goemans}
M.~X. Goemans.
\newblock Smallest compact formulation for the permutahedron.
\newblock {\em Math. Program.}, 153(1, Ser. B):5--11, 2015.

\bibitem[GPRT17]{GPRT}
J.~Gouveia, K.~Pashkovich, R.~Z. Robinson, and R.~R. Thomas.
\newblock Four-dimensional polytopes of minimum positive semidefinite rank.
\newblock {\em J. Combin. Theory Ser. A}, 145:184--226, 2017.

\bibitem[GPT10]{GPT-thetabodies}
J.~Gouveia, P.~A. Parrilo, and R.~R. Thomas.
\newblock Theta bodies for polynomial ideals.
\newblock {\em SIAM J. Optim.}, 20(4):2097--2118, 2010.

\bibitem[GPT13]{GPT-lifts}
J.~Gouveia, P.~A. Parrilo, and R.~R. Thomas.
\newblock Lifts of convex sets and cone factorizations.
\newblock {\em Math. Oper. Res.}, 38(2):248--264, 2013.

\bibitem[GRT13]{GRTpsdminimal}
J.~Gouveia, R.~Z. Robinson, and R.~R. Thomas.
\newblock Polytopes of minimum positive semidefinite rank.
\newblock {\em Discrete Comput. Geom.}, 50(3):679--699, 2013.

\bibitem[HN09]{HeltonNie2009}
J.~W. Helton and J.~Nie.
\newblock Sufficient and necessary conditions for semidefinite representability
  of convex hulls and sets.
\newblock {\em SIAM J. Optim.}, 20(2):759--791, 2009.

\bibitem[HN10]{HeltonNie2010}
J.~W. Helton and J.~Nie.
\newblock Semidefinite representation of convex sets.
\newblock {\em Math. Program.}, 122(1, Ser. A):21--64, 2010.

\bibitem[Las01]{Lasserre}
J.~B. Lasserre.
\newblock Global optimization with polynomials and the problem of moments.
\newblock {\em SIAM J. Optim.}, 11(3):796--817, 2000/01.

\bibitem[Lov79]{LovaszTheta}
L.~Lov\'asz.
\newblock On the {S}hannon capacity of a graph.
\newblock {\em IEEE Trans. Inform. Theory}, 25(1):1--7, 1979.

\bibitem[LRS15]{LeeRaghavendraSteurer}
J.~R. Lee, P.~Raghavendra, and D.~Steurer.
\newblock Lower bounds on the size of semidefinite programming relaxations.
\newblock In {\em S{TOC}'15---{P}roceedings of the 2015 {ACM} {S}ymposium on
  {T}heory of {C}omputing}, pages 567--576. ACM, New York, 2015.

\bibitem[Nem07]{Nemirovski}
A.~Nemirovski.
\newblock Advances in convex optimization: conic programming.
\newblock In {\em International {C}ongress of {M}athematicians. {V}ol. {I}},
  pages 413--444. Eur. Math. Soc., Z\"urich, 2007.

\bibitem[Sch18a]{Scheiderer-curves}
C.~Scheiderer.
\newblock Semidefinite {R}epresentation for {C}onvex {H}ulls of {R}eal
  {A}lgebraic {C}urves.
\newblock {\em SIAM J. Appl. Algebra Geom.}, 2(1):1--25, 2018.

\bibitem[Sch18b]{Scheiderer-HeltonNie}
C.~Scheiderer.
\newblock Spectrahedral {S}hadows.
\newblock {\em SIAM J. Appl. Algebra Geom.}, 2(1):26--44, 2018.

\bibitem[Yan91]{Yannakakis}
M.~Yannakakis.
\newblock Expressing combinatorial optimization problems by linear programs.
\newblock {\em J. Comput. System Sci.}, 43(3):441--466, 1991.

\bibitem[Zie95]{ZieglerBook}
G.M. Ziegler.
\newblock {\em Lectures on {P}olytopes}, volume 152 of {\em Graduate Texts in
  Mathematics}.
\newblock Springer-Verlag, New York, 1995.

\end{thebibliography}
\end{document}